\documentclass[11pt]{article}

\usepackage[margin=1in]{geometry}
\usepackage[T1]{fontenc}
\usepackage[utf8]{inputenc}
\usepackage{lmodern}
\usepackage{microtype}
\usepackage{amsmath,amssymb,amsthm}
\usepackage{tikz}
\usepackage{booktabs}
\usepackage{enumitem}
\usepackage{xcolor}
\usepackage[hidelinks]{hyperref}
\usepackage{aliascnt}
\usepackage[nameinlink,noabbrev]{cleveref}

\setlist[itemize]{itemsep=0.2em, topsep=0.3em}
\setlist[enumerate]{itemsep=0.2em, topsep=0.3em}
\allowdisplaybreaks

\DeclareMathOperator{\Spec}{Spec}

\DeclareMathOperator{\Hom}{Hom}

\DeclareMathOperator{\Hilb}{Hilb}
\DeclareMathOperator{\Supp}{Supp}
\DeclareMathOperator{\Soc}{Soc}
\DeclareMathOperator{\Bl}{Bl}

\DeclareMathOperator{\length}{length}
\DeclareMathOperator{\res}{res}

\DeclareMathOperator{\Span}{Span}

\newcommand{\CC}{\mathbb C}

\newcommand{\NN}{\mathbb N}
\newcommand{\Aff}{\mathbb A}
\newcommand{\PP}{\mathbb P}

\newcommand{\eps}{\varepsilon}

\newcommand{\Hilbn}[1]{(\Aff^2)^{[#1]}}
\newcommand{\Hilbnest}[2]{(\Aff^2)^{[#1,#2]}}

\theoremstyle{plain}
\newtheorem{theorem}{Theorem}[section]
\newaliascnt{proposition}{theorem}
\newtheorem{proposition}[proposition]{Proposition}
\aliascntresetthe{proposition}
\newaliascnt{lemma}{theorem}
\newtheorem{lemma}[lemma]{Lemma}
\aliascntresetthe{lemma}

\newaliascnt{corollary}{theorem}
\newtheorem{corollary}[corollary]{Corollary}
\aliascntresetthe{corollary}

\theoremstyle{definition}
\newaliascnt{definition}{theorem}
\newtheorem{definition}[definition]{Definition}
\aliascntresetthe{definition}
\newaliascnt{example}{theorem}
\newtheorem{example}[example]{Example}
\aliascntresetthe{example}
\newaliascnt{convention}{theorem}
\newtheorem{convention}[convention]{Convention}
\aliascntresetthe{convention}

\theoremstyle{remark}
\newaliascnt{remark}{theorem}
\newtheorem{remark}[remark]{Remark}
\aliascntresetthe{remark}

\newcommand{\yboxsize}{0.42}

\newcommand{\drawyoungmarked}[3]{%
\begin{tikzpicture}[baseline={(0,-0.15)}]
  \foreach \x/\y in {#1} {
    \draw[thick] (\x*\yboxsize,\y*\yboxsize) rectangle ++(\yboxsize,\yboxsize);
  }
  \fill[gray!35] (#2*\yboxsize,#3*\yboxsize) rectangle ++(\yboxsize,\yboxsize);
  \draw[thick] (#2*\yboxsize,#3*\yboxsize) rectangle ++(\yboxsize,\yboxsize);
\end{tikzpicture}%
}

\newcommand{\drawyoungaxes}[1]{%
\begin{tikzpicture}[baseline={(0,-0.15)}]
  \draw[->] (-0.2,0) -- (2.7,0) node[right] {$i$};
  \draw[->] (0,-0.2) -- (0,2.7) node[above] {$j$};
  \foreach \x/\y in {#1} {
    \draw[thick] (\x*\yboxsize,\y*\yboxsize) rectangle ++(\yboxsize,\yboxsize);
  }
\end{tikzpicture}%
}

\crefname{theorem}{Theorem}{Theorems}
\Crefname{theorem}{Theorem}{Theorems}
\crefname{proposition}{Proposition}{Propositions}
\Crefname{proposition}{Proposition}{Propositions}
\crefname{lemma}{Lemma}{Lemmas}
\Crefname{lemma}{Lemma}{Lemmas}
\crefname{corollary}{Corollary}{Corollaries}
\Crefname{corollary}{Corollary}{Corollaries}
\crefname{definition}{Definition}{Definitions}
\Crefname{definition}{Definition}{Definitions}
\crefname{remark}{Remark}{Remarks}
\Crefname{remark}{Remark}{Remarks}
\crefname{section}{Section}{Sections}
\Crefname{section}{Section}{Sections}

\title{Deformation Theory and Torus-Fixed Geometry of\\ the Nested Hilbert Scheme of Points}

\author{Chenyang Zhao\thanks{Department of Mathematics, Imperial College London, United Kingdom.\ \textit{Email:} \texttt{cz2922@ic.ac.uk}}}

\date{}

\begin{document}
\maketitle

\begin{quote}
\centering
``All arts among the human race are from Prometheus.''\\[0.3em]
\emph{Aeschylus, Prometheus Bound, l. 506}
\end{quote}

\begin{abstract}
In this paper, we give a self-contained, expository account of the nested Hilbert scheme
\[
  \Hilbnest{n}{n+1}=\Hilb^{n,n+1}(\Aff^2)
\]
from a combination of deformation theory, torus actions, and Young diagram combinatorics. We first recall the scheme theory and functor basics needed to define Hilbert schemes. We then use a classic result on first order deformations to identify
\[
  T_I\Hilbn{n}\cong \Hom_{\CC[x,y]}(I,\CC[x,y]/I).
\]
For a nested pair $I\subset J$, with $\dim_\CC \CC[x,y]/I=n+1$ and $\dim_\CC \CC[x,y]/J=n$, the tangent space becomes a compatibility kernel
\[
T_{(I,J)}\Hilbnest{n}{n+1}
\cong
\ker\!\left(
\Hom(I,R/I)\oplus \Hom(J,R/J)
\longrightarrow
\Hom(I,R/J)
\right).
\]
The torus-fixed points are indexed by a partition $\lambda\vdash n+1$ together with a removable corner $c$ of its Young diagram. This corner is not only combinatorial, but also the monomial form of a one dimensional socle direction in $R/I_\lambda$.  The blow-up map to $\Hilbn{n}\times\Aff^2$ has fibres given by projective spaces of one dimensional quotients of $J/\mathfrak m_pJ$, whose torus-fixed points are addable boxes of the smaller diagram. These two local fibres explain how the universal family, the blow-up geometry, and Young diagram combinatorics come together in the study of the local geometry of the nested Hilbert scheme of points. Finally, we derive the tangent weight formula at a fixed point $(I_\lambda,I_{\lambda\setminus c})$ in the torus convention used here. Using the standard arrow basis, we show in the proof how the arm-leg weights are modified by the compatibility kernel through a shortening rule determined by $c$. A Macaulay2 verification computes the compatibility kernel from monomial syzygies and checks the weight formula for all partitions of size at most $16$.
\end{abstract}

\section*{Acknowledgement}
This paper is based on my master's thesis, completed at Imperial College London under the supervision of Dr Matt Booth, whom I thank for his generous guidance and continuous support. I acknowledge the use of ChatGPT for language editing, grammar fixing, LaTeX formatting, and generating the Young diagrams and arrows. All the mathematical content is my own work.

\tableofcontents

\section{Introduction}
\label{chap:introduction}

Hilbert schemes of points are moduli spaces in algebraic geometry. If $X$ is a smooth surface, the Hilbert scheme $X^{[n]}$ parametrizes zero dimensional closed subschemes of $X$ of length $n$. It contains the open locus of $n$ distinct reduced points, but also nonreduced schemes supported at fewer points. Fogarty's theorem says that $X^{[n]}$ is smooth of dimension $2n$ when $X$ is a smooth surface \cite{Fogarty1968}. This smoothness is one reason why Hilbert schemes of points on surfaces are important examples in algebraic geometry and also interact with representation theory and symmetric combinatorics \cite{Nakajima1999,Haiman2001}.

This paper studies the first nontrivial nested Hilbert scheme of points on the affine plane. We write
\[
  R=\CC[x,y],
  \qquad
  S=\Aff^2_\CC=\Spec R.
\]
The nested Hilbert scheme
\[
  S^{[n,n+1]}=\Hilb^{n,n+1}(S)
\]
parametrizes pairs of zero dimensional closed subschemes
\[
  Y\subset Z\subset S,
  \qquad
  \length(Y)=n,
  \qquad
  \length(Z)=n+1.
\]
Equivalently, it parametrizes pairs of ideals
\[
  I\subset J\subset R,
  \qquad
  \dim_\CC R/I=n+1,
  \qquad
  \dim_\CC R/J=n.
\]
The inclusion of ideals is opposite to the inclusion of subschemes, hence $I$ defines the larger subscheme $Z$, while $J$ defines the smaller subscheme $Y$.

Nested Hilbert schemes on surfaces appear frequently in enumerative geometry. For example, Gholampour--Sheshmani--Yau construct virtual fundamental classes for nested Hilbert schemes of points and curves on nonsingular projective surfaces \cite{GholampourSheshmaniYau2020}.

The local geometry of $S^{[n,n+1]}$ can be studied by combining deformation theory with the combinatorics of torus-fixed monomial ideals. A tangent vector to the ordinary Hilbert scheme is a first-order deformation of an ideal. For an ideal $I\subset R$ of colength $n$, the standard deformation theory identification is
\[
  T_I S^{[n]}\cong \Hom_R(I,R/I),
\]
see \cite[Proposition~3.2.1]{Sernesi2006}. For a nested pair $I\subset J$, the two first-order deformations must be compatible with the inclusion. This gives the usual kernel formula for the incidence condition, compare Cheah's tangent space formulation for nested Hilbert schemes \cite[Section~0.4]{Cheah1998} and Can's kernel formula \cite[Section~5]{Can2007}:
\[
T_{(I,J)}S^{[n,n+1]}
\cong
\ker\left(
\Hom_R(I,R/I)\oplus \Hom_R(J,R/J)
\xrightarrow{\delta}
\Hom_R(I,R/J)
\right),
\]
where
\[
  \delta(\alpha,\beta)=\pi\circ \alpha-\beta\circ\iota.
\]
Here $\iota:I\hookrightarrow J$ is the inclusion map and $\pi:R/I\to R/J$ is the quotient map. This compatibility kernel, proved in \Cref{thm:nested-kernel}, is the main link between deformation theory and local geometry of the nested Hilbert scheme in this paper.

The two dimensional torus
\[
  T=(\CC^\times)^2
\]
acts on $S$ by scaling the two affine coordinates, and hence acts on both $S^{[n]}$ and $S^{[n,n+1]}$. The fixed points of $S^{[n]}$ are monomial ideals, indexed by partitions or Young diagrams. At a monomial ideal $I_\lambda$, the ordinary tangent weights are the standard arm-leg weights, and Nakajima described the formula in an arm-leg form \cite[Proposition~5.8]{Nakajima1999}, and Haiman uses the same weights in equivariant localization \cite[Section~3]{Haiman1998}. For the nested scheme, a torus-fixed point is indexed by a partition $\lambda\vdash n+1$ together with a removable corner $c$ of its Young diagram.

The corner is not only a combinatorial structure, it also has an algebraic meaning. If $I\subset J$ and $J/I$ has length one supported at a point $p$, then $J/I$ is a one dimensional submodule of $R/I$ killed by the maximal ideal $\mathfrak m_p$. Equivalently, it is a line in the local socle
\[
  \Soc_p(R/I)=\{\bar f\in R/I: \mathfrak m_p\bar f=0\}.
\]
For a monomial ideal $I_\lambda$ supported at the origin, the monomial socle directions are exactly the removable corners of $D(\lambda)$. Dually, the blow-up map to $S^{[n]}\times S$ has fibres described by projective spaces of one dimensional quotients of $J/\mathfrak m_pJ$, and for monomial ideals, the torus-fixed quotient directions are addable boxes of the smaller diagram. These two fibre descriptions explain why removable corners, addable boxes, and first-order nested deformations are different ways of describing the same incidence condition.

This paper has three expository goals. First, it compares the birational map from $S^{[n,n+1]}$ to the universal family over $S^{[n+1]}$ with the blow-up map to $S^{[n]}\times S$. Second, it works through the corresponding local fibre descriptions in terms of socle lines and one dimensional quotients of $J/\mathfrak m_pJ$. Third, it gives a deformation-theoretic account of the shortening rule for nested tangent weights in the torus convention used here, via the compatibility kernel. None of these results is new: the tangent space computation is due to Cheah \cite[Section~2.6]{Cheah1998}, Chaput--Evain studied the same geometry using staircases and cleft pairs \cite[Sections~2.1--2.2]{ChaputEvain2015}, and Koncki--Zielenkiewicz explained the shortening rule in arrow notation \cite[Section~5.3]{KonckiZielenkiewicz2025}.

We also include a Macaulay2 verification \cite{Macaulay2}. We compute the compatibility kernel from monomial generators and syzygies, then compare the resulting weight multiset with the Young diagram shortening rule for all partitions of size at most $16$.

\Cref{chap:background} introduces scheme theory and defines Hilbert schemes of points and their first-order deformations with a brief introduction of deformation theory.  \Cref{chap:young-monomial} introduces partitions, Young diagrams, monomial ideals, corners, and addable boxes.  \Cref{chap:hilb-plane} then introduces the torus-fixed geometry of the Hilbert scheme $S^{[n]}$ on the affine plane, including the torus action and the arm-leg tangent weights.  \Cref{chap:nested-geometry} studies the nested scheme $S^{[n,n+1]}$, its relation to the universal family and the blow-up of $S^{[n]}\times S$ and the local fibre.  \Cref{chap:nested-tangent} explains the compatibility kernel formula for tangent spaces and the shortening rule for torus weights. Finally, \Cref{chap:examples-computation} gives explicit examples and explains the Macaulay2 verification.

\section{Schemes, Hilbert schemes, and first-order deformations}
\label{chap:background}

This section introduces basic scheme theory, explains how Hilbert schemes represent flat families, why Hilbert schemes of points are the relevant moduli spaces here, and why their tangent spaces are equivalent to homomorphisms of ideals. References for scheme theory are Hartshorne \cite[Chapter~II]{Hartshorne1977} and Grothendieck's construction of Hilbert schemes \cite{GrothendieckHilbert1961}, and for Hilbert schemes of points on surfaces we follow the notation of Nakajima \cite[Chapter~1]{Nakajima1999}.

In this paper all schemes are over $\CC$. The affine plane is denoted
\[
  S=\Aff^2_\CC=\Spec R,
  \qquad R=\CC[x,y].
\]

\subsection{Affine schemes and closed subschemes}

For a commutative ring $A$, the affine scheme $\Spec A$ is the set of prime ideals of $A$ equipped with the Zariski topology and the structure sheaf $\mathcal O_{\Spec A}$, see \cite[Chapter~II, Section~2]{Hartshorne1977}. If $A$ is a finitely generated $\CC$-algebra, then $\Spec A$ is the analogue of an affine algebraic variety. Hilbert schemes remember nonreduced subschemes. For example, the ideals $(x,y)$ and $(x,y)^2$ have the same underlying support in $\Aff^2$, namely the origin, but they define different closed subschemes.

A closed subscheme of $\Spec A$ is defined by an ideal $I\subset A$, see \cite[Chapter~II, Section~3]{Hartshorne1977}. It is
\[
  \Spec(A/I)\subset \Spec A.
\]
The underlying set is the closed set $V(I)$, but the quotient ring $A/I$ also contains nilpotent and infinitesimal structure. An element $\bar f\in A/I$ is nilpotent if $\bar f^N=0$ for some $N>0$, and such elements are the algebraic form of infinitesimal structure.

For example,
\[
  \CC[x,y]/(x,y)\cong \CC,
\]
whereas
\[
  \CC[x,y]/(x,y)^2
  =
  \Span_\CC\{1,x,y\}.
\]
Thus $(x,y)^2$ defines a fat point of length three supported at the origin, not a reduced point. In particular, a zero dimensional closed subscheme of $\Aff^2_\CC$ is defined by an ideal $I\subset \CC[x,y]$ such that $R/I$ is a finite dimensional $\CC$-vector space.

\begin{definition}
Let $Z$ be a zero dimensional scheme over $\CC$. Its length is
\[
  \length(Z)=\dim_\CC H^0(Z,\mathcal O_Z).
\]
If $Z\subset \Aff^2_\CC$ is defined by an ideal $I\subset R$, then
\[
  \length(Z)=\dim_\CC R/I.
\]
\end{definition}

Thus a point of the Hilbert scheme of $n$ points on $\Aff^2_\CC$ is represented by an ideal $I\subset R$ with $\dim_\CC R/I=n$.

\subsection{The functor of points}
\label{sec:functor-points}

A scheme $X$ can be studied through its functor of points
\[
  h_X:(\CC\text{-schemes})^{\mathrm{op}}\longrightarrow \mathrm{Sets},
  \qquad
  h_X(B)=\Hom_{\CC}(B,X).
\]
An element of $h_X(B)$ is called a $B$-valued point of $X$. When $B=\Spec \CC$, this recovers the ordinary closed points over $\CC$. When $B$ is nonreduced, a $B$-valued point can carry infinitesimal information. This point of view is standard in moduli theory, see \cite[Chapter~II, Section~2]{Hartshorne1977}.

For example, let
\[
  D=\Spec \CC[\eps]/(\eps^2).
\]
A morphism $D\to X$ whose closed point maps to $x\in X$ is a tangent vector to $X$ at $x$. This is the reason that tangent spaces and first-order deformations are naturally expressed using the dual numbers.

Moduli spaces are most naturally defined by functors. A scheme $M$ represents a moduli functor $F$ if there is a natural bijection of sets
\[
  F(B)\cong \Hom(B,M)
\]
for every test scheme $B$. In this case, families over $B$ are the same thing as morphisms $B\to M$.

\subsection{Hilbert polynomials and flat families}

Let $X$ be a projective scheme over $\CC$, equipped with a closed embedding $X\subset \PP^N$. The embedding gives twisting sheaves $\mathcal O_X(m)$, and for a coherent sheaf $\mathcal F$ on $X$ we write $\mathcal F(m)=\mathcal F\otimes\mathcal O_X(m)$. Its Euler characteristic is
\[
  \chi(X,\mathcal F(m))=
  \sum_i(-1)^i\dim_\CC H^i(X,\mathcal F(m)).
\]
For all sufficiently large integers $m$, the function
\[
  m\longmapsto \chi(X,\mathcal F(m))
\]
agrees with a polynomial, called the Hilbert polynomial of $\mathcal F$, see \cite[Chapter~III, Section~5 and Section~9]{Hartshorne1977}. If $Z\subset X$ is a closed subscheme, the Hilbert polynomial of $Z$ means the Hilbert polynomial of $\mathcal O_Z$.

A family of subschemes over a base $B$ is a closed subscheme
\[
  \mathcal Z\subset X\times B.
\]
We require that $\mathcal Z$ be flat over $B$. Flatness prevents the Hilbert polynomial from jumping in families, and for a flat projective family the Hilbert polynomial of the fibres is locally constant on the base, see \cite[Chapter~III, Section~9]{Hartshorne1977}.

For example, let $X=\Aff^1_x$ and $B=\Aff^1_t$, and consider the closed subscheme
\[
  \mathcal Z=V(tx,x^2)\subset \Aff^1_x\times \Aff^1_t.
\]
The fibre over $t=a$ is defined by the ideal $(ax,x^2)\subset \CC[x]$. If $a\neq 0$, this ideal is $(x)$, so the fibre has length $1$. If $a=0$, the fibre is defined by $(x^2)$, so it has length $2$. Thus the length jumps in the family and the quotient $\CC[x,t]/(tx,x^2)$ has $t$-torsion and is not flat over $\CC[t]$.

For zero dimensional subschemes, the Hilbert polynomial is constant and equal to the length. Thus a family of length-$n$ subschemes should be a flat family whose fibres have Hilbert polynomial
\[
  P(m)=n.
\]
This is the case relevant to Hilbert schemes of points.

\subsection{The Hilbert functor and representability}

Let $X$ be a projective scheme over $\CC$. For a numerical polynomial $P$, meaning a polynomial with rational coefficients which takes integer values for all sufficiently large integers, the Hilbert functor
\[
  \mathcal{H}ilb^P_X:(\CC\text{-schemes})^{\mathrm{op}}\longrightarrow \mathrm{Sets}
\]
sends a test scheme $B$ to the set of closed subschemes
\[
  \mathcal Z\subset X\times B
\]
which are flat over $B$ and whose geometric fibres have Hilbert polynomial $P$. Here a geometric fibre means the fibre after base change to an algebraic closure of the residue field of the base point.

Grothendieck's representability theorem says that this functor is represented by a projective scheme $\Hilb^P(X)$ \cite{GrothendieckHilbert1961}. Equivalently, $\Hilb^P(X)$ carries a universal closed subscheme
\[
  \mathcal U\subset X\times \Hilb^P(X),
\]
flat over $\Hilb^P(X)$, such that every flat family
\[
  \mathcal Z\subset X\times B
\]
with Hilbert polynomial $P$ is obtained uniquely by pulling back $\mathcal U$ along a morphism
\[
  B\longrightarrow \Hilb^P(X).
\]
This is the fine moduli property of the Hilbert scheme. Nakajima recalls this statement in the form used for Hilbert schemes of points in \cite[Theorem~1.1]{Nakajima1999}.

For explicit constructions of Hilbert schemes of points, see Gustavsen--Laksov--Skjelnes \cite{GustavsenLaksovSkjelnes2007}, and for explicit affine open charts covering $\Hilb^n(\Aff^2)$, see Huibregtse \cite{Huibregtse2002}.

\subsection{Hilbert schemes of points}
\label{sec:hilbert-schemes-of-points}

When the Hilbert polynomial is the constant polynomial $n$, the corresponding Hilbert scheme is called the Hilbert scheme of $n$ points and is denoted
\[
  X^{[n]}=\Hilb^n(X).
\]
Its points are zero dimensional closed subschemes $Z\subset X$ of length
\[
  \length(Z)=\dim_\CC H^0(Z,\mathcal O_Z)=n.
\]
If $x_1,\ldots,x_n$ are distinct points of $X$, then the reduced union
\[
  Z=\{x_1,\ldots,x_n\}
\]
is one point of $X^{[n]}$. The point of using the Hilbert scheme is that it also includes the subschemes which appear when points collide. For example, a subscheme of length of two supported at one smooth point contains not only the point but also an infinitesimal tangent direction, and this is the basic difference between $X^{[n]}$ and the symmetric product $\operatorname{Sym}^n X$: the symmetric product records only the support points and their multiplicities, while the Hilbert scheme remembers the scheme structure of nonreduced limits. This comparison is made precise by the Hilbert--Chow morphism
\[
  \pi:X^{[n]}\longrightarrow \operatorname{Sym}^n X,
  \qquad
  Z\longmapsto \sum_{x\in X}\length(\mathcal O_{Z,x})[x].
\]
Over the open locus of $n$ distinct points, $\pi$ is an isomorphism. Over the diagonals, the symmetric product forgets the embedded and infinitesimal information that still remains valid for the Hilbert scheme \cite[Introduction and Theorem~1.5]{Nakajima1999}.

For the affine plane
\[
  S=\Aff^2_\CC=\Spec R,
  \qquad R=\CC[x,y],
\]
the notation becomes
\[
  S^{[n]}=\Hilb^n(S)=\Hilb^n(\Aff^2_\CC).
\]
Hence, in this affine case, a point of $S^{[n]}$ is the same thing as an ideal $I\subset R$ satisfying
\[
  \dim_\CC R/I=n.
\]
The universal family
\[
  \mathcal Z_n\subset S^{[n]}\times S
\]
has fibre over $[I]\in S^{[n]}$ equal to $\Spec(R/I)$.

\begin{remark}
Nakajima also gives a concrete construction of $S^{[n]}$ on the affine plane as a quotient of triples $(B_1,B_2,i)$, where $B_1,B_2\in \operatorname{End}_\CC(\CC^n)$ commute and $i:\CC\to \CC^n$ is a cyclic vector \cite[Theorem~1.9]{Nakajima1999}. From an ideal $I\subset \CC[x,y]$, one obtains $V=R/I$, the multiplication operators by $x$ and $y$, and the vector $1\in V$. Conversely, such data recover the ideal as the kernel of the map $f\mapsto f(B_1,B_2)i(1)$. This is also related to explicit constructions of Hilbert schemes of points, see Gustavsen--Laksov--Skjelnes \cite{GustavsenLaksovSkjelnes2007}. We do not use this construction in the paper, but it gives a concrete model for the moduli interpretation in the affine plane.
\end{remark}

\subsection{Deformations and embedded deformations}
\label{sec:deformations}

Deformation theory studies how a geometric object varies in a family, and its infinitesimal part represents the directions in which the object can move to first order. The basic method is to replace the base of a family by the spectrum of a small Artinian ring, so that a single morphism out of this spectrum captures a first-order variation. We follow Sernesi \cite[Chapter~1]{Sernesi2006}. In this paper, an \emph{Artinian local $\CC$-algebra} means a local Artinian $\CC$-algebra with residue field $\CC$. The smallest non-trivial such ring is the ring of dual numbers
\[
  A_\eps=\CC[\eps]/(\eps^2),
  \qquad D=\Spec A_\eps,
\]
which has a unique closed point and a one dimensional tangent direction recorded by $\eps$. A morphism out of $D$ is exactly a choice of first-order direction, and $D$ is the universal base for such directions.

\begin{definition}[Deformation of a scheme]
\label{def:deformation}
Let $X_0$ be a scheme over $\CC$ and let $A$ be an Artinian local $\CC$-algebra. A \emph{deformation of $X_0$ over $A$} is a scheme $\mathcal X$ together with a flat morphism $\mathcal X\to\Spec A$ and an isomorphism
\[
  \mathcal X\times_{\Spec A}\Spec\CC\;\xrightarrow{\ \sim\ }\;X_0
\]
of the closed fibre with $X_0$. When $A=A_\eps$, we call $\mathcal X$ a \emph{first-order} (or \emph{infinitesimal}) deformation of $X_0$.
\end{definition}

Flatness is what makes $\mathcal X$ a genuine family rather than an arbitrary scheme containing $X_0$: it forces the fibres to vary without jumps, so that the closed fibre $X_0$ is deformed rather than degenerated. Often one wants to deform not an abstract scheme but a subscheme of a fixed ambient space, keeping the ambient space rigid. This is the situation relevant to our Hilbert schemes.

\begin{definition}[Embedded deformation]
\label{def:embedded-deformation}
Let $Z\subset X$ be a closed subscheme of a scheme $X$ over $\CC$, and let $A$ be an Artinian local $\CC$-algebra. An \emph{embedded deformation of $Z$ in $X$ over $A$} is a closed subscheme
\[
  \mathcal Z\subset X\times\Spec A
\]
that is flat over $\Spec A$ and whose closed fibre satisfies
\[
  \mathcal Z\times_{\Spec A}\Spec\CC \;=\; Z\subset X.
\]
The ambient scheme $X$ is fixed. When $A=A_\eps$, we call $\mathcal Z$ a \emph{first-order embedded deformation} of $Z$.
\end{definition}

\begin{remark}
\label{rem:embedded-vs-abstract}
An embedded deformation remembers how $Z$ sits inside the fixed ambient $X$, not just the isomorphism class of $Z$. Forgetting the embedding turns an embedded deformation into a deformation of $Z$ in the sense of \Cref{def:deformation}, but two different embedded deformations may induce the same abstract deformation. The Hilbert scheme parametrises the embedded data, so it is embedded deformations that control its infinitesimal structure.
\end{remark}

This is precisely the infinitesimal content of the functor of points of the Hilbert scheme from \Cref{sec:hilbert-schemes-of-points}. A morphism $D\to\Hilb^{P}(X)$ whose closed point maps to $[Z]$ is the same thing as a flat family over $D$ of closed subschemes of $X$ restricting to $Z$, that is, a first-order embedded deformation of $Z$. In the affine situation of this paper, where $X=\mathbb A^2=\Spec R$ with $R=\CC[x,y]$ and $Z=\Spec(R/I)$, such a deformation is represented by an ideal
\[
  I_\eps\subset R\otimes_\CC A_\eps
\]
with $(R\otimes_\CC A_\eps)/I_\eps$ flat over $A_\eps$ and $I_\eps\otimes_{A_\eps}\CC=I$. We will make this correspondence explicit in the next two subsections.

\subsection{The dual numbers and the tangent space}
\label{sec:dual-numbers-tangent}

We now recall the Zariski tangent space and phrase it through the dual numbers, in the form that connects directly to the deformation theory above and to the torus action used in the later chapters.

\begin{definition}[Zariski tangent space]
\label{def:tangent-space}
Let $M$ be a scheme over $\CC$ and let $m\in M$ be a closed point with maximal ideal $\mathfrak m_m\subset\mathcal O_{M,m}$. The \emph{Zariski tangent space} of $M$ at $m$ is the $\CC$-vector space
\[
  T_mM=\bigl(\mathfrak m_m/\mathfrak m_m^{2}\bigr)^{\vee}.
\]
Equivalently, through the functor of points,
\[
  T_mM\;\cong\;\bigl\{\,\psi\in M(A_\eps)\;:\;\psi|_{\Spec\CC}=m\,\bigr\},
\]
the set of morphisms $D\to M$ whose restriction to the closed point of $D$ is $m$. The two definitions agree as $\CC$-vector spaces, see \cite[Chapter~II, Exercise~2.8]{Hartshorne1977} for the dual numbers description of the tangent space, and \cite[Section~2.2]{Sernesi2006} for the tangent space $t_F=F(A_\eps)$ of a functor of Artin rings.
\end{definition}

\begin{remark}[Functoriality and the torus action]
\label{rem:tangent-functorial}
The identification in \Cref{def:tangent-space} is functorial: a morphism of pointed schemes induces a $\CC$-linear map on tangent spaces, because it acts by composition on morphisms out of $D$. Consequently, if an algebraic group $G$ acts on $M$ and fixes the point $m$, then $G$ acts $\CC$-linearly on $T_mM$. This is the mechanism behind every weight computation in this paper. The torus $T=(\CC^\times)^2$ acts on $S^{[n]}$, and on the nested Hilbert scheme, fixing exactly the monomial ideals, hence $T$ acts linearly on the tangent space at each fixed point, and that tangent space splits into a direct sum of one dimensional weight spaces. Decomposing these representations is the subject of \Cref{chap:hilb-plane} and \Cref{chap:nested-tangent}. Under the identification $T_IS^{[n]}\cong\Hom_R(I,R/I)$ of \Cref{prop:tangent-hilb}, the induced action is
\[
  (\tau\cdot\varphi)(f)=\tau\cdot\varphi(\tau^{-1}\cdot f),
\qquad \tau\in T,
\]
which is the formula used to read off characters in \Cref{chap:hilb-plane}.
\end{remark}

Now we use \Cref{def:tangent-space} to study Hilbert schemes, combining it with \Cref{sec:deformations}. A morphism $D\to\Hilb^{P}(X)$ over $[Z]$ is a first-order embedded deformation of $Z$, so
\[
  T_{[Z]}\Hilb^{P}(X)\;\cong\;\bigl\{\text{first-order embedded deformations of }Z\bigr\}.
\]
For the affine plane this becomes more explicit: a tangent vector to $S^{[n]}$ at an ideal $I\subset R$ of colength $n$ is represented by an ideal
\[
  I_\eps\subset R\otimes_\CC A_\eps,
  \qquad
  I_\eps\otimes_{A_\eps}\CC=I,
\]
such that the quotient $(R\otimes_\CC A_\eps)/I_\eps$ is flat over $A_\eps$. Because $A_\eps$ is Artinian local, flatness of a finitely generated module is equivalent to freeness, so this quotient is a free $A_\eps$-module of rank $n$. The next subsection identifies the set of such ideals $I_\eps$ with the Hom space $\Hom_R(I,R/I)$.

\subsection{The tangent space to the Hilbert scheme}
\label{sec:tangent-hilb-general}

By \Cref{sec:dual-numbers-tangent}, the tangent space $T_IS^{[n]}$ is the set of first-order embedded deformations $I_\eps$ of $\Spec(R/I)$. The following proposition identifies this set with a Hom space, and it is the bridge between deformation theory and explicit calculations, see Sernesi's local Hilbert functor calculation \cite[Proposition~3.2.1]{Sernesi2006}.

\begin{proposition}[Tangent space to the Hilbert scheme]
\label{prop:tangent-hilb}
Let $I\subset R=\CC[x,y]$ be an ideal of colength $n$. Then
\[
  T_I S^{[n]}
  \cong
  \Hom_R(I,R/I).
\]
More generally, if $Z\subset X$ is a closed subscheme (\Cref{def:embedded-deformation}), then the first-order embedded deformations of $Z$ in $X$ are governed by
\[
  \Hom_{\mathcal O_X}(\mathcal I_Z,\mathcal O_Z)
  \;\cong\;
  H^0\bigl(Z,N_{Z/X}\bigr),
\]
the global sections of the normal sheaf $N_{Z/X}=\mathcal Hom_{\mathcal O_X}(\mathcal I_Z/\mathcal I_Z^{2},\mathcal O_Z)$, see \cite[Theorem~4.3.5]{Sernesi2006}.
\end{proposition}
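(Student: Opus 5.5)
By the discussion in \Cref{sec:dual-numbers-tangent}, $T_IS^{[n]}$ is the set of ideals $I_\eps\subset R_\eps:=R\otimes_\CC A_\eps=R[\eps]/(\eps^2)$ such that $R_\eps/I_\eps$ is flat over $A_\eps$ and $I_\eps\otimes_{A_\eps}\CC=I$. The plan is to write down an explicit bijection between this set and $\Hom_R(I,R/I)$, and then to check that it respects the vector space structures.

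\textbf{From a deformation to a homomorphism.} Let $I_\eps$ be such a deformation. Tensoring the exact sequence $0\to\eps A_\eps\to A_\eps\to\CC\to 0$ with the flat module $R_\eps/I_\eps$ shows that
\[
  0\to \eps(R_\eps/I_\eps)\to R_\eps/I_\eps\to R/I\to 0
\]
is exact and that $\eps(R_\eps/I_\eps)\cong R/I$. Concretely, this gives $I_\eps\cap\eps R=\eps I$, and the image of $I_\eps$ under $\eps\mapsto 0$ is $I$. So for each $f\in I$ there is some $g\in R$ with $f+\eps g\in I_\eps$. Any two such $g$ differ by an element $h$ with $\eps h\in I_\eps\cap\eps R=\eps I$. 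Hence the class $\varphi(f)=\bar g\in R/I$ is well defined. It is $R$-linear because $I_\eps$ is an $R_\eps$-submodule and $\eps^2=0$: we have $r(f+\eps g)=rf+\eps rg$, and sums behave the same way.

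\textbf{From a homomorphism to a deformation.} Conversely, given $\varphi\in\Hom_R(I,R/I)$, set
\[
  I_\varphi=\{f+\eps g : f\in I,\ g\in R,\ \bar g=\varphi(f)\}.
\]
This is an $R_\eps$-submodule: closure under multiplication by $R$ follows from $R$-linearity of $\varphi$, and $\eps(f+\eps g)=\eps f$ lies in $I_\varphi$ because $\varphi(0)=0$ and $f\in I$. Its reduction mod $\eps$ is $I$, and $I_\varphi\cap\eps R=\eps I$.

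The main obstacle is proving flatness of $R_\eps/I_\varphi$. Over $A_\eps$ I will use the local criterion: a finite module $M$ is flat if and only if $\ker(\eps\colon M\to M)=\eps M$, or equivalently $M$ is free of rank $\dim_\CC(M/\eps M)$. I will show $\dim_\CC R_\eps/I_\varphi=2n$ by filtering by $\eps$. The quotient $R_\eps/(I_\varphi+\eps R_\eps)$ is $R/I$, of dimension $n$. The submodule $(I_\varphi+\eps R)/I_\varphi\cong \eps R/(I_\varphi\cap\eps R)=\eps R/\eps I$ also has dimension $n$. Since the multiplication map $\eps\colon M/\eps M\to\eps M$ is surjective, it is then an isomorphism, which gives $\ker\eps=\eps M$. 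The two constructions are mutually inverse by definition.

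\textbf{Linearity and the general statement.} It remains to check that the bijection is $\CC$-linear, where the tangent space carries the vector space structure coming from $D\sqcup_{\mathrm{pt}}D$ and the map $A_\eps\times_\CC A_\eps\to A_\eps$. This is a routine check that adding the $g$'s corresponds to the sum of tangent vectors, and scaling $\eps$ corresponds to scaling $\varphi$. For the sheaf version, the same argument applies on affine opens of $X$ and glues, because the construction is local. Finally, $\Hom_{\mathcal O_X}(\mathcal I_Z,\mathcal O_Z)=\Hom(\mathcal I_Z/\mathcal I_Z^2,\mathcal O_Z)$, since any such homomorphism kills $\mathcal I_Z^2$; this gives $H^0(Z,N_{Z/X})$, as in \cite[Theorem~4.3.5]{Sernesi2006}.
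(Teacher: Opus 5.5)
Your proposal is correct and follows essentially the same route as the paper: the same two mutually inverse constructions, $I_\eps\mapsto\varphi$ (well defined because $\ker\eps=\eps Q$, equivalently $I_\eps\cap\eps R=\eps I$) and $\varphi\mapsto I_\varphi$. The only variations are that you check flatness of $R_\eps/I_\varphi$ by a dimension count together with the criterion $\ker\eps=\eps M$, where the paper instead exhibits an explicit free basis from a complement $W$ of $I$ in $R$, and that you also sketch the linearity and the sheaf version, which the paper does not prove but simply cites from Sernesi.
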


\begin{proof}
We prove the affine statement, since this is the case this paper is looking at. Let $A=A_\eps=\CC[\eps]/(\eps^2)$ and $R_A=R\otimes_\CC A$. Suppose first that $I_A\subset R_A$ is a flat first-order deformation of $I$. For each $f\in I$, choose an element $g\in R$ such that
\[
  f+\eps g\in I_A.
\]
The class of $g$ modulo $I$ is independent of all choices. Indeed, if $f+\eps g$ and $f+\eps g'$ are both in $I_A$, then $\eps(g-g')\in I_A$. Let $Q=R_A/I_A$. Since $Q$ is a free $A$-module, the kernel of multiplication by $\eps$ on $Q$ is $\eps Q$. The class of $g-g'$ in $Q$ is represented by an element of $R\subset R_A$ and is killed by $\eps$, so its reduction modulo $\eps$ is zero. This means exactly that $g-g'\in I$. Define
\[
  \varphi_{I_A}(f)=\bar g\in R/I.
\]
A direct check shows that $\varphi_{I_A}$ is $R$-linear.

Conversely, given an $R$-linear map $\varphi:I\to R/I$, define
\[
  I_\varphi=
  \bigl\{\, f+\eps g\in R_A:
  f\in I \text{ and } \bar g=\varphi(f)\in R/I\,\bigr\}.
\]
The $R$-linearity of $\varphi$ implies that $I_\varphi$ is an ideal of $R_A$. Its special fibre is $I$. To see flatness, choose a $\CC$-vector space complement $W\subset R$ to $I$. Every class in $R_A/I_\varphi$ is represented uniquely by an element of $W\oplus \eps W$, so $R_A/I_\varphi$ is a free $A$-module with basis given by a basis of $W\cong R/I$. These two constructions are inverse to one another. Hence tangent vectors are precisely the elements of $\Hom_R(I,R/I)$.
\end{proof}

\begin{remark}
The proof above also fixes the sign convention used later. Replacing $\varphi$ by $-\varphi$ gives an equivalent identification of the tangent space with $\Hom_R(I,R/I)$, and the vector space and its torus weights are unchanged.
\end{remark}

\subsection{Smoothness of Hilbert schemes of points on surfaces}

The Hilbert scheme of points on a smooth surface is very well behaved.

\begin{theorem}[Fogarty \cite{Fogarty1968}]
\label{thm:fogarty}
Let $X$ be a smooth connected surface over an algebraically closed field. Then $X^{[n]}$ is smooth, connected, and irreducible of dimension $2n$.
\end{theorem}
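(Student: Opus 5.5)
The plan is the classical strategy: first prove smoothness of dimension $2n$ by a tangent space computation together with a lower bound on the local dimension, and then prove irreducibility (which gives connectedness) by an induction on the Hilbert--Chow stratification. Since $X^{[n]}$ is étale locally a product over the support points, and a smooth surface is étale locally $\Aff^2$, I would first reduce to subschemes of $S=\Aff^2$. Concretely, if $Z=\bigsqcup_i Z_i$ with $Z_i$ supported at distinct points $p_i$ of length $n_i$, then near $[Z]$ the scheme $X^{[n]}$ is étale isomorphic to $\prod_i X^{[n_i]}$ near $([Z_i])$. Each $X^{[n_i]}$ near a punctual subscheme is étale isomorphic to $S^{[n_i]}$ near a subscheme supported at the origin, using an étale chart $X\supset U\to\Aff^2$.

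\emph{Dimension count.} The open locus of $n$ distinct reduced points is isomorphic to an open subset of $\operatorname{Sym}^nX$ via the Hilbert--Chow morphism, so it has dimension $2n$. For the lower bound at an arbitrary point I would use that $S^{[n]}$ is locally cut out in a Grassmannian-type chart, or the quotient description recalled in the remark (commuting pairs $(B_1,B_2)$ with cyclic vector modulo $GL_n$). The commuting variety condition $[B_1,B_2]=0$ imposes at most $n^2$ equations on the open set of $2n^2+n$ dimensional triples with free $GL_n$ action. Hence every component has dimension at least $2n^2+n-n^2-n^2=\ldots$; this naive count gives only $n$, so instead I would use that $[B_1,B_2]$ has trace zero and that its image is constrained further. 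The cleaner route is to bound the tangent space from above instead: show $\dim\Hom_R(I,R/I)=2n$ for every colength-$n$ ideal $I$, via \Cref{prop:tangent-hilb}. Combined with irreducibility (below) and the fact that the generic point has dimension $2n$, a tangent space of dimension exactly $2n$ everywhere forces smoothness.

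\emph{Tangent dimension $2n$.} Take a free resolution $0\to R^{m-1}\to R^m\to I\to 0$, which exists by Hilbert--Burch since $\operatorname{pd}R/I=2$. Apply $\Hom_R(-,R/I)$ and compute $\chi$: one has $\dim\Hom(I,R/I)-\dim\operatorname{Ext}^1(I,R/I)=n$. Serre duality on a compactification, or the symmetry $\operatorname{Ext}^i_R(R/I,R/I)$ and $\operatorname{Ext}^{2-i}$ duality for finite length modules over the regular ring $R$ of dimension $2$, gives $\dim\operatorname{Ext}^1(R/I,R/I)=2\dim\Hom(R/I,R/I)+\ldots$. The key identity I would aim for is $\chi(R/I,R/I)=\sum(-1)^i\dim\operatorname{Ext}^i(R/I,R/I)=0$ for finite length modules on a surface, since the Euler pairing depends only on classes supported in dimension $0$. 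With $\operatorname{Ext}^0=\operatorname{Ext}^2=n$ by duality, this gives $\operatorname{Ext}^1(R/I,R/I)=2n$, and $\operatorname{Ext}^1(R/I,R/I)\cong\Hom(I,R/I)$ because $\Hom(R,R/I)\to\Hom(I,R/I)$ has cokernel $\operatorname{Ext}^1$ and the map $R/I\to\Hom(I,R/I)$ induced from $\Hom(R,R/I)$ is zero.

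\emph{Irreducibility.} This is the main obstacle. I would follow Briançon/Fogarty: show that the punctual Hilbert scheme of length $k$ at a point has dimension $\le k-1$. Then the stratum of $X^{[n]}$ with support partition $(n_1,\dots,n_r)$ has dimension $\le 2r+\sum(n_i-1)=n+r<2n$ unless all $n_i=1$. Since every component has dimension $2n$ (by smoothness, as the tangent space has dimension $2n$ and the local dimension is at least $2n$ — the latter obtained via deformation-theoretic unobstructedness, $\operatorname{Ext}^2(I,R/I)=0$), no component lies in the boundary. Hence the open locus of distinct points is dense, and it is irreducible as an open subset of $\operatorname{Sym}^nX$. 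For the punctual bound I would try the torus action: degenerate to monomial ideals and count cells, or cite Briançon's result that every punctual ideal is a limit of curvilinear ones, the curvilinear locus having dimension $n-1$.
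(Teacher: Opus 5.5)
There is a genuine gap, at the one step everything else rests on: the lower bound $\dim_{[Z]}X^{[n]}\ge 2n$ at nonreduced points. (The paper only cites Fogarty and gives no proof, so I am judging your argument on its own terms.) Your first paragraph gets smoothness from irreducibility, and your irreducibility paragraph gets ``every component has dimension $2n$'' from smoothness. So the only non-circular input is the claim that the local dimension is at least $2n$ by unobstructedness, because $\operatorname{Ext}^2_R(I,R/I)=0$. That is the wrong group. For embedded deformations, i.e.\ quotients $R_A\twoheadrightarrow R_A/I_A$, the obstruction to lifting across a small extension lies in $\operatorname{Ext}^1_R(I,R/I)$. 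To see this, lift the generators and the Hilbert--Burch relation matrix arbitrarily; the failure of the lifted relations is an element of $\Hom_R(R^{m-1},R/I)$, well defined modulo the image of $\Hom_R(R^m,R/I)$. The vanishing of $\operatorname{Ext}^2_R(I,R/I)$ only reflects $\operatorname{pd}_R I=1$ and carries no information. (Likewise, $\operatorname{Ext}^2_R(I,I)=0$ would only control deformations of $I$ as an abstract module, not as a submodule of $R$.) Your own Euler characteristic step gives $\dim_\CC\operatorname{Ext}^1_R(I,R/I)=\dim_\CC\Hom_R(I,R/I)-n=n\neq 0$. So the estimate ``local dimension $\ge$ tangent dimension minus obstruction dimension'' yields only $n$, the same useless bound as your commuting-variety count. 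With only the upper bound from $\dim T=2n$, nothing excludes an extra component of dimension $<2n$ inside the diagonal locus, along which $X^{[n]}$ would simply be singular. Hence neither smoothness nor irreducibility has been proved. Your tangent computation ($\chi(R/I,R/I)=0$, duality, $\Hom_R(I,R/I)\cong\operatorname{Ext}^1_R(R/I,R/I)$) and the stratum bound $n+r<2n$ are correct and can be kept.

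What you need is that the obstructions vanish even though the obstruction group does not. The most economical fix, close to your own setup, is Hilbert--Burch. Over an Artinian $A'$, any lift $\Phi'$ of the $(m-1)\times m$ relation matrix of $I$ has maximal minors generating an ideal $I'$ with $R_{A'}/I'$ flat. Indeed, the lifted complex $0\to R_{A'}^{m-1}\to R_{A'}^{m}\to R_{A'}$ consists of flat modules and reduces to an exact resolution, hence resolves $R_{A'}/I'$. Conversely, every flat deformation $I_A$ arises this way: flatness lets the presentation lift, and $\Hom_{R_A}(I_A,R_A)=R_A$ forces the minors to agree with the lifted generators up to a unit. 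Matrices lift freely, so the local Hilbert functor is smooth and $X^{[n]}$ is smooth of dimension $\dim_\CC\Hom_R(I,R/I)=2n$ at every point. Your stratification argument then gives irreducibility, hence connectedness.

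Alternatively, restore the datum $j$ in Nakajima's description. On stable data, $\mu=[B_1,B_2]+ij$ has surjective differential: a matrix orthogonal to its image commutes with $B_1,B_2$ and kills $i$, hence is zero by cyclicity. The count then becomes $2n^2+2n-n^2-n^2=2n$. Without $j$, the image of $d[B_1,B_2]$ is orthogonal to the $n$-dimensional algebra $\CC[B_1,B_2]$, which is why your count fell short by $n$.

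A third route is to use Briançon's theorem at full strength: every punctual scheme is a limit of curvilinear ones, and curvilinear schemes visibly deform to reduced ones along a smooth curve germ. This puts every point in the closure of the reduced locus, giving irreducibility and local dimension $2n$ before any deformation theory, and your tangent computation then gives smoothness.
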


For the affine plane this says that $S^{[n]}$ is smooth of dimension $2n$. In the later torus-fixed calculations, this dimension will appear again as the number of arm-leg tangent weights.

\subsection{Nested Hilbert schemes as incidence schemes}
\label{nhtdef}
The nested Hilbert scheme used in this paper is the incidence scheme whose points are pairs $Y\subset Z$ with lengths $n$ and $n+1$. This case where the lengths are different by one is the smooth nested Hilbert scheme studied by Cheah \cite[Theorem~3.2.2]{Cheah1998}. Since the later deformation formulas use ideal notation, we regard it as a subscheme
\[
  S^{[n,n+1]}
  \subset
  S^{[n+1]}\times S^{[n]}
\]
with coordinates $(Z,Y)$. In ideals, we write a point as
\[
  I\subset J\subset R,
  \qquad \dim_\CC R/I=n+1,
  \qquad \dim_\CC R/J=n.
\]
The letter $I$ always denotes the ideal of the larger scheme, and $J$ the ideal of the smaller scheme.

This incidence condition is closed in the product. It is therefore meaningful to study tangent vectors by starting with tangent vectors to the two Hilbert scheme factors and imposing a first-order compatibility condition. The resulting kernel formula is proved in \Cref{sec:nested-kernel}.

\section{Partitions, Young diagrams, and monomial ideals}
\label{chap:young-monomial}

This section introduces the combinatorial convention that is used in the paper.  The convention we use is the French convention: rows are drawn from bottom to top, and the box $(0,0)$ is the bottom-left box.

\subsection{Partitions and diagrams}

\begin{definition}
A partition of $n$ is a finite nonincreasing sequence of positive integers
\[
  \lambda=(\lambda_0,\lambda_1,\ldots,\lambda_{r-1}),
  \qquad
  \lambda_0\ge \lambda_1\ge \cdots\ge \lambda_{r-1}>0,
\]
such that
\[
  |\lambda|=\lambda_0+\cdots+\lambda_{r-1}=n.
\]
The Young diagram of $\lambda$ is the finite subset
\[
  D(\lambda)=\{(i,j)\in \NN^2:0\le j<r,\ 0\le i<\lambda_j\}.
\]
\end{definition}

Thus $j$ is the row index and $i$ is the column index.  For example, the partition $(3,2,1)$ is represented by
\[
  \drawyoungaxes{0/0,1/0,2/0,0/1,1/1,0/2}.
\]

\subsection{Staircases and monomial ideals}

Let $R=\CC[x,y]$.  A monomial ideal $I\subset R$ is determined by the set of exponent pairs
\[
  E(I)=\{(i,j)\in \NN^2:x^i y^j\in I\}.
\]
Since $I$ is an ideal, $E(I)$ is upward closed, which means that if $(i,j)\in E(I)$ and $(a,b)\in \NN^2$, then $(i+a,j+b)\in E(I)$.

The complement
\[
  B(I)=\NN^2\setminus E(I)
\]
is therefore downward closed.  If $I$ has finite colength, then $B(I)$ is finite.  We call this finite downward closed subset the staircase of $I$.

For background on monomial ideals, staircases, and their combinatorial resolutions, see Miller--Sturmfels \cite{MillerSturmfels2005}.

\begin{proposition}
\label{prop:partitions-monomial-ideals}
Monomial ideals of finite colength in $R=\CC[x,y]$ are in bijection with partitions.  The partition $\lambda$ corresponds to the ideal
\[
  I_\lambda=\bigl\langle x^i y^j:(i,j)\notin D(\lambda)\bigr\rangle.
\]
Moreover,
\[
  R/I_\lambda
  \cong
  \Span_\CC\{x^i y^j:(i,j)\in D(\lambda)\},
\]
and hence
\[
  \dim_\CC R/I_\lambda=|\lambda|.
\]
\end{proposition}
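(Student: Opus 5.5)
The plan is to pass through the staircase dictionary set up just before the statement. A monomial ideal $I$ is spanned as a $\CC$-vector space by the monomials it contains. The reason is that $I$ is generated by monomials, and any element $f\cdot m$ with $m$ a monomial generator is a sum of monomial multiples of $m$, each of which lies in $I$. Hence $I$ is determined by $E(I)$, and equivalently by the staircase $B(I)=\NN^2\setminus E(I)$. The proof then splits into two bijections: finite-colength monomial ideals correspond to finite downward closed subsets of $\NN^2$, and these subsets correspond to partitions via $\lambda\mapsto D(\lambda)$.

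For the first bijection, start from a monomial ideal $I$ of finite colength. The set $E(I)$ is upward closed, so $B(I)$ is downward closed. Since $I=\Span_\CC\{x^iy^j:(i,j)\in E(I)\}$, the monomials indexed by $B(I)$ map to a basis of $R/I$. So $B(I)$ is finite exactly when $R/I$ is finite dimensional, and in that case $\dim_\CC R/I=|B(I)|$. Conversely, for a finite downward closed set $B$, the span of the monomials outside $B$ is closed under multiplication by $x$ and $y$, because $\NN^2\setminus B$ is upward closed. It is therefore the monomial ideal $\langle x^iy^j:(i,j)\notin B\rangle$, and its exponent set is exactly $\NN^2\setminus B$. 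These two constructions are mutually inverse.

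For the second bijection, take a finite downward closed set $B$. Let $\lambda_j$ be the number of boxes of $B$ in row $j$. Downward closedness in the $i$-direction makes row $j$ equal to $\{0,\dots,\lambda_j-1\}$. Downward closedness in the $j$-direction gives $\lambda_j\ge\lambda_{j+1}$, and finiteness means only finitely many $\lambda_j$ are positive. So $B=D(\lambda)$ for a unique partition $\lambda$. Conversely, every $D(\lambda)$ is visibly finite and downward closed. Composing the two bijections gives the ideal $I_\lambda=\langle x^iy^j:(i,j)\notin D(\lambda)\rangle$, the basis description of $R/I_\lambda$, and $\dim_\CC R/I_\lambda=|D(\lambda)|=|\lambda|$.

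The only step needing real care is the claim that a monomial ideal is the $\CC$-span of its monomials. This is what makes $E(I)$ a complete invariant, and it is what justifies the monomial basis of the quotient. I would prove it directly from the observation above that the generators are monomials, noting that any polynomial in $I$ lies in the span of monomial multiples of those generators. Everything else is routine bookkeeping with downward closed sets.
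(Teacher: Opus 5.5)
Your proposal is correct and follows essentially the same route as the paper. Both arguments pass from a finite-colength monomial ideal to its finite downward closed staircase $B(I)$, read off $\lambda_j$ as the row lengths, and use the monomials indexed by $D(\lambda)$ as a basis of $R/I_\lambda$. Your version is slightly more careful: you explicitly justify that a monomial ideal is the $\CC$-span of the monomials it contains, and that each row of the staircase is an initial segment $\{0,\dots,\lambda_j-1\}$, whereas the paper uses both facts implicitly.
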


\begin{proof}
If $\lambda$ is a partition, then the complement of $D(\lambda)$ is upward closed, so the displayed monomials generate an ideal.  The monomials indexed by $D(\lambda)$ are exactly the monomials not contained in the ideal, and their residue classes form a basis of the quotient.

Conversely, let $I$ be a monomial ideal of finite colength.  The set $B(I)$ of monomials outside $I$ is finite and downward closed.  Define
\[
  \lambda_j=\#\{i:(i,j)\in B(I)\}.
\]
Downward closedness implies
\[
  \lambda_0\ge \lambda_1\ge \lambda_2\ge \cdots,
\]
and finite colength implies that only finitely many $\lambda_j$ are nonzero.  Removing the zero terms gives a partition $\lambda$, and $B(I)=D(\lambda)$.
\end{proof}

\subsection{Corners and addable boxes}

\begin{definition}
A box $c=(i,j)\in D(\lambda)$ is a removable corner if
\[
  (i+1,j)\notin D(\lambda)
  \qquad\text{and}\qquad
  (i,j+1)\notin D(\lambda).
\]
The set of removable corners is denoted $C(\lambda)$.
\end{definition}

Equivalently, $c$ is a removable corner if deleting it leaves the Young diagram of another partition.  For example, the partition $(3,2,1)$ has three removable corners:
\[
  (2,0),\qquad (1,1),\qquad (0,2).
\]
The middle corner is shown here:
\[
  \drawyoungmarked{0/0,1/0,2/0,0/1,1/1,0/2}{1}{1}.
\]

\begin{definition}
A box $a\notin D(\lambda)$ is addable if $D(\lambda)\cup\{a\}$ is again the Young diagram of a partition.
\end{definition}

In the nested Hilbert scheme, it is usually more convenient to begin with the larger diagram $D(\lambda)$ and remove one corner.  If $c\in C(\lambda)$, we write
\[
  \lambda\setminus c
\]
for the partition whose diagram is $D(\lambda)\setminus\{c\}$.

\subsection{Arms and legs}

\begin{definition}
Let $s=(i,j)\in D(\lambda)$.  The arm length of $s$ is
\[
  a_\lambda(s)=\#\{(i',j)\in D(\lambda):i'>i\}.
\]
The leg length of $s$ is
\[
  \ell_\lambda(s)=\#\{(i,j')\in D(\lambda):j'>j\}.
\]
\end{definition}

Thus $a_\lambda(s)$ counts boxes strictly to the right of $s$, and $\ell_\lambda(s)$ counts boxes strictly above $s$.  For the partition $(3,2,1)$ and the box $s=(0,0)$, one has
\[
  a_\lambda(s)=2,
  \qquad
  \ell_\lambda(s)=2.
\]
For the box $s=(1,1)$, one has
\[
  a_\lambda(s)=0,
  \qquad
  \ell_\lambda(s)=0.
\]

\subsection{Minimal generators}

The minimal monomial generators of $I_\lambda$ lie along the outer boundary of $D(\lambda)$.  If
\[
  \lambda=(\lambda_0,\ldots,\lambda_{r-1}),
\]
then the ideal $I_\lambda$ is generated by the boundary monomials
\[
  x^{\lambda_0},\quad
  x^{\lambda_1}y,
  \quad \ldots,\quad
  x^{\lambda_{r-1}}y^{r-1},
  \quad y^r,
\]
after removing redundant generators.  For example,
\[
  I_{(3,2,1)}=(x^3,x^2y,xy^2,y^3),
\]
while
\[
  I_{(2,2)}=(x^2,y^2),
\]
because the boundary monomial $x^2y$ is a multiple of $x^2$.

This boundary description is useful for explicit computation of the tangent space, since a homomorphism $I_\lambda\to R/I_\lambda$ is determined by the images of these minimal generators, subject to the syzygies among them.

\section{Torus-fixed geometry of \texorpdfstring{$S^{[n]}$}{S[n]}}
\label{chap:hilb-plane}

We now study the torus-fixed geometry of the Hilbert scheme of points on the affine plane
\[
  S=\Aff^2_\CC=\Spec \CC[x,y].
\]
The torus action on $S$ makes $S^{[n]}$ into a $T$-variety.  At the fixed points, the tangent representation is described by the arm and leg lengths of the corresponding Young diagram.

\subsection{Torus action and weight convention}
\label{sec:torus-convention}

Let
\[
  T=(\CC^\times)^2.
\]
We use coordinates $(q,t)$ on $T$.  The torus acts on the affine plane by
\[
  (q,t)\cdot(a,b)=(qa,tb).
\]
This induces the contragredient action on the coordinate ring:
\[
  ((q,t)\cdot f)(x,y)=f(q^{-1}x,t^{-1}y).
\]
Thus, as a function, the monomial $x^iy^j$ has character $q^{-i}t^{-j}$.

\begin{convention}
All tangent weights in this paper are written for the above action on the affine plane.  Hence the tangent space to $\Aff^2$ at the origin has weights $q$ and $t$.  Equivalently, when a homomorphism sends a monomial with exponent $u$ to a monomial with exponent $v$, its tangent character is $q^{u_1-v_1}t^{u_2-v_2}$.  If one instead lets $T$ act directly on the coordinate ring by $x\mapsto qx$ and $y\mapsto ty$, all characters displayed below should be inverted.
\end{convention}

\subsection{Fixed points of the torus action}

\begin{proposition}
\label{prop:fixed-points-hilb}
The $T$-fixed points of $S^{[n]}$ are precisely the monomial ideals of finite colength in $R$.  Equivalently,
\[
  (S^{[n]})^T=\{I_\lambda:\lambda\vdash n\}.
\]
\end{proposition}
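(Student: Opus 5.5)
The plan is to identify a $T$-fixed point of $S^{[n]}$ with an ideal $I\subset R$ of colength $n$ satisfying $\tau\cdot I=I$ for all $\tau\in T$. This holds because the action of $T$ on $S^{[n]}$ is induced from the action on $S$, and a closed point of $S^{[n]}$ is the same as an ideal of colength $n$ (\Cref{sec:hilbert-schemes-of-points}). With this in place, the statement reduces to the purely algebraic claim that a $T$-stable ideal of $R$ is exactly a monomial ideal. The second description then follows from \Cref{prop:partitions-monomial-ideals}, which puts monomial ideals of colength $n$ in bijection with partitions $\lambda\vdash n$ via $\lambda\mapsto I_\lambda$.

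One direction is immediate. If $I$ is generated by monomials, then each generator $x^iy^j$ is sent by $\tau=(q,t)$ to the scalar multiple $q^{-i}t^{-j}x^iy^j$ (\Cref{sec:torus-convention}). Hence $\tau\cdot I\subseteq I$ for every $\tau$, and applying this to $\tau^{-1}$ gives equality. So every monomial ideal of finite colength is a fixed point.

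For the converse, let $I$ be $T$-stable and $f=\sum c_{ij}x^iy^j\in I$. We show each monomial $x^iy^j$ with $c_{ij}\neq 0$ lies in $I$, which makes $I$ generated by monomials.
\begin{itemize}
\item Since $I$ is $T$-stable, the functions $(\tau\cdot f)(x,y)=\sum c_{ij}q^{-i}t^{-j}x^iy^j$ lie in $I$ for all $(q,t)\in(\CC^\times)^2$.
\item Let $M$ be the finite set of exponents with $c_{ij}\neq0$. The characters $(q,t)\mapsto q^{-i}t^{-j}$ for $(i,j)\in M$ are pairwise distinct, so by Dedekind/Artin independence of characters (or a Vandermonde argument) there are finitely many $\tau_1,\dots,\tau_k\in T$ for which the matrix $\bigl(\chi_{(i,j)}(\tau_l)\bigr)$ has full column rank.
\item Inverting this matrix expresses each $c_{ij}x^iy^j$ as a $\CC$-linear combination of the $\tau_l\cdot f\in I$. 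Hence $x^iy^j\in I$.
\end{itemize}
A cleaner alternative, avoiding the linear algebra, is to work in the finite-dimensional space $\Span\{x^iy^j:(i,j)\in M\}$. This space is preserved by $T$, as is $I\cap\Span\{x^iy^j:(i,j)\in M\}$. A $T$-stable subspace of a direct sum of pairwise distinct one-dimensional characters is a sum of some of those character lines, because a diagonalizable torus representation decomposes into isotypic components. So the component $c_{ij}x^iy^j$ of $f$ lies in $I$.

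The main obstacle is this weight-decomposition step, and in particular making sure the characters are pairwise distinct. Distinctness holds because $(i,j)\mapsto q^{-i}t^{-j}$ is injective, and this is exactly where the two-dimensional torus is needed: a one-dimensional subtorus with equal weights would not separate $x$ and $y$. A remaining point is the scheme-theoretic one, namely that fixed closed points are what is meant. Since we work with $\CC$-points, I will note that the action on closed points is $I\mapsto\tau\cdot I$ by functoriality of the universal family under pullback along the automorphism $\tau$ of $S$, and will not develop the fixed-point subscheme structure further.
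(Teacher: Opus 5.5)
Your proposal is correct and follows essentially the same route as the paper: both use that $R=\bigoplus_{(i,j)}\CC x^iy^j$ splits into pairwise distinct one-dimensional $T$-weight spaces, so a $T$-stable ideal is a sum of weight spaces and hence monomial, and then both invoke the partition bijection. Your reduction to the finite-dimensional span of the monomials appearing in a given $f$ (or, alternatively, the independence-of-characters argument) makes explicit the local finiteness that the paper's one-line appeal to diagonalizability leaves implicit.
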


\begin{proof}
Every monomial ideal is fixed because the torus rescales monomials.  Conversely, suppose $I\subset R$ is fixed.  The coordinate ring decomposes into one dimensional weight spaces
\[
  R=\bigoplus_{(i,j)\in\NN^2}\CC x^iy^j.
\]
Since $T$ is diagonalizable, any $T$-stable subspace is a direct sum of weight spaces.  Therefore a $T$-stable ideal is spanned by monomials, hence is a monomial ideal.  The finite colength condition then gives a partition by \Cref{prop:partitions-monomial-ideals}.
\end{proof}

\subsection{The tangent representation}

The action of $T$ on $S$ induces an action on the Hilbert scheme $S^{[n]}$ by transport of subschemes. For the defining ideals, if $\tau\in T$ and $I\subset R$ is an ideal of colength $n$, then $\tau$ sends $I$ to the ideal
\[
  \tau\cdot I=\{\tau\cdot f:f\in I\}.
\]
Since $\tau$ acts on $R$ by algebra automorphisms, $\tau\cdot I$ is again an ideal, and since $\tau:S\to S$ is an isomorphism, the length of $R/I$ is preserved. Thus $S^{[n]}$ is a $T$-variety.

If $I\in S^{[n]}$ is a $T$-fixed point, then each $\tau\in T$ fixes $I$ and hence induces a linear automorphism of the Zariski tangent space $T_I S^{[n]}$ by differentiating the action at $I$. Therefore $T_I S^{[n]}$ is a representation of $T$. Under the identification
\[
  T_I S^{[n]}
  \cong
  \Hom_R(I,R/I),
\]
this representation is the induced action on the Hom space:
\[
  (\tau\cdot \varphi)(f)
  =
  \tau\cdot\varphi(\tau^{-1}\cdot f),
  \qquad
  \tau\in T,\quad f\in I.
\]
Since $I$ is $T$-stable, $\tau^{-1}\cdot f\in I$, and $R/I$ is a $T$-module. Since $T=(\CC^\times)^2$ is a torus, every finite dimensional $T$-representation decomposes into weight spaces. We can record the tangent representation by its $T$-character.

Let $\lambda\vdash n$. By \Cref{prop:tangent-hilb},
\[
  T_{I_\lambda}S^{[n]}
  \cong
  \Hom_R(I_\lambda,R/I_\lambda).
\]
The following theorem is the standard arm-leg formula for the character of this $T$-representation. The computation goes back to Ellingsrud--Str\o mme \cite{EllingsrudStromme1987}, Nakajima described the formula in arm-leg form \cite[Proposition~5.8]{Nakajima1999}, and Haiman uses the same arm-leg weights in the localization formula for the Hilbert scheme \cite[Section~3]{Haiman1998}. The proof below shows the translation to the torus convention used here.

\begin{theorem}[Arm-leg tangent weights, in our convention]
\label{thm:hilb-arm-leg}
Let $\lambda\vdash n$. The $T$-character of the tangent space at the fixed point $I_\lambda\in S^{[n]}$ is
\[
  \operatorname{ch}_T T_{I_\lambda}S^{[n]}
  =
  \sum_{s\in D(\lambda)}
  \left(
    q^{a_\lambda(s)+1}t^{-\ell_\lambda(s)}
    +
    q^{-a_\lambda(s)}t^{\ell_\lambda(s)+1}
  \right).
\]
In particular, the tangent space has $2n$ one dimensional weight summands.
\end{theorem}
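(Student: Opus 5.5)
The plan is to avoid computing $\Hom_R(I_\lambda,R/I_\lambda)$ directly. Instead we exhibit $2n$ explicit $T$-eigenvectors with the stated characters, show that they are linearly independent, and conclude using the dimension count from \Cref{thm:fogarty}. Since $S^{[n]}$ is smooth of dimension $2n$, \Cref{prop:tangent-hilb} gives $\dim_\CC \Hom_R(I_\lambda,R/I_\lambda)=2n$. So $2n$ independent weight vectors form a basis, and the character is the sum of their weights. This also gives the final assertion about $2n$ summands.

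\emph{Construction.} For a vector $w\in\mathbb Z^2$, define the ``shift by $w$'' map
\[
\varphi_w:I_\lambda\to R/I_\lambda,\qquad x^uy^v\mapsto x^{u+w_1}y^{v+w_2}
\]
if $(u,v)+w\in D(\lambda)$, and $x^uy^v\mapsto 0$ otherwise. Extend $\CC$-linearly, and then truncate further to a chosen region, as described next.

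For a box $s=(i,j)$ with arm $a$ and leg $\ell$:
\begin{itemize}
\item The arm arrow uses $w=(-(a+1),\ell)$. It sends the monomials of $E(I_\lambda)$ in the strip of columns $>i+a$ and rows in $[j,j+\ell]$ onto the boxes of the strip of $D(\lambda)$ from $s$ going up to the top of its column. Its image consists of the boxes $(i,j'')$ with $j\le j''\le j+\ell$, hit from the generators on the row ends.
\item The leg arrow is the transposed construction with $w=(a,-(\ell+1))$.
\end{itemize}
By the convention of \Cref{sec:torus-convention}, a map sending exponent $u$ to $u+w$ has character $q^{-w_1}t^{-w_2}$. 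This gives $q^{a+1}t^{-\ell}$ and $q^{-a}t^{\ell+1}$ respectively, as required. Concretely, $\varphi_s^{\mathrm{arm}}$ is nonzero exactly on those monomials $x^uy^v\in I_\lambda$ with $u\ge i+a+1$, $j\le v\le j+\ell$, and $(u-a-1,\,v+\ell)\in D(\lambda)$. The leg arrow is defined symmetrically.

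\emph{Main obstacle: $R$-linearity.} A $\CC$-linear monomial map is $R$-linear exactly when, for every $m\in E(I_\lambda)$ and each variable $e\in\{(1,0),(0,1)\}$, the image of $m+e$ equals $e$ times the image of $m$ in $R/I_\lambda$. The only possible failure is the following situation: $m$ is sent to $0$ (because of the truncation, or because $m+w\notin D(\lambda)$), while $m+e$ is sent to a nonzero box.

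I would check this case by case using the definitions of $a$ and $\ell$:
\begin{itemize}
\item Moving right by $x$ cannot enter the support from the left. This holds because the column $i+a$ is the last box of row $j$, so any monomial in rows $[j,j+\ell]$ just left of column $i+a+1$ lies in $D(\lambda)$, not in $I_\lambda$.
\item Moving up by $y$ from row $j-1$ into row $j$ lands at a box whose image lies in row $j+\ell+1$ or above in column $i$. That is outside $D(\lambda)$ by the definition of $\ell$, so its image is also zero.
\end{itemize}
This boundary bookkeeping is the step I expect to need the most care, especially the staircase shape between rows $j$ and $j+\ell$. It uses that the row lengths are nonincreasing, so the row ends in that band all lie at columns $\ge i+1$.

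\emph{Independence.} Different arrows can carry the same weight, so independence must come from their images rather than their characters. Each $\varphi_s^{\mathrm{arm}}$ hits the box $s$ itself from the monomial $x^{i+a+1}y^{j}$ (shifted up by $\ell$ it lands at $(i,j+\ell)$; alternatively I would normalise by the lowest row). I would order the $2n$ arrows by the pair consisting of source generator and target box. I would then show that the matrix recording, for each arrow, its value on the appropriate distinguished generator is triangular with nonzero diagonal. Within a fixed weight space this amounts to comparing finitely many arrows with distinct $s$. Arm and leg arrows never share a weight, since an arm weight has positive $q$-exponent and a leg weight has nonpositive $q$-exponent. Hence they are automatically independent of each other, and the $2n$ vectors form a weight basis.
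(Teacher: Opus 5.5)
Your strategy is a legitimate alternative to the paper's proof: exhibit $2n$ independent weight vectors in $\Hom_R(I_\lambda,R/I_\lambda)$ and use Fogarty's $\dim=2n$. The paper does no computation at all; it quotes Nakajima's Proposition~5.8 and substitutes $T_1=t$, $T_2=q$. However, there is a genuine gap: \textbf{the truncated shift maps you construct are in general not $R$-linear}.

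\textbf{A counterexample.} Take $\lambda=(1,1)$, so $I_\lambda=(x,y^2)$ and $R/I_\lambda=\Span_\CC\{1,y\}$. Take $s=(0,1)$, with $a(s)=\ell(s)=0$ and $w=(-1,0)$. Your concrete description gives $\varphi^{\mathrm{arm}}_s(xy)=y$. It also gives $\varphi^{\mathrm{arm}}_s(x)=0$, because row $0$ lies outside the strip. Hence $\varphi^{\mathrm{arm}}_s(y\cdot x)=y\neq 0=y\,\varphi^{\mathrm{arm}}_s(x)$. The transposed failure occurs for $\lambda=(2)$ and $s=(1,0)$: your leg arrow sends $xy\mapsto x$ but $y\mapsto 0$.

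\textbf{Where the argument breaks.} The faulty step is your second bullet. Translation by $w=(-(a+1),\ell)$ sends row $j$ to row $j+\ell$, not to row $j+\ell+1$. So $x^{\lambda_j}y^j$ goes to $x^iy^{j+\ell}\neq 0$. For $j\ge 1$, linearity therefore breaks as soon as some $x^{u_1}y^{j-1}$ with $x^{u_1}y^j$ in your support already lies in $I_\lambda$. This happens exactly when $\lambda_{j-1}\le\lambda_{j+\ell}+a$, and in particular whenever $\lambda_{j-1}=\lambda_j$.

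\textbf{The strip is also misdescribed.} All columns $\ge i$ have height $\le j+\ell+1$. So a point of your strip has translate in $D(\lambda)$ only if it comes from row $j$. The image is therefore the part of row $j+\ell$ from column $i$ to its end, not the segment of column $i$ above $s$. In particular the arrow does not hit $s$ unless $\ell=0$. Your triangularity sketch for independence rests on that false claim, and is in any case only announced, not carried out.

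\textbf{What a repair needs.} You need the full ``sliding'' support, not a strip.
\begin{enumerate}
\item Let $P_w=\{u\in E(I_\lambda):u+w\in D(\lambda)\}$, with adjacency $u\sim u+(1,0)$ and $u\sim u+(0,1)$. Define the vector as translation by $w$ on the whole connected component $C_s$ of $P_w$ containing $(\lambda_j,j)$, and zero elsewhere.
\item Row $m$ of $P_w$ is the interval $\max(\lambda_m,a+1)\le u_1\le\lambda_{m+\ell}+a$. Row $j+1$ does not meet row $j$, since $\lambda_{j+\ell+1}+a\le i+a<\lambda_j$. So $C_s$ consists of row $j$ together with a run of rows \emph{below} it, which is the opposite direction to your strip.
\item Such a translation fails linearity only if some $u\in E(I_\lambda)$ with $u+w\notin\NN^2$ has $u+(1,0)\in C_s$. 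That would force $u_1=a$ and $\lambda_{u_2}=a+1$, i.e.\ $u\in D(\lambda)$, which is impossible.
\item Two distinct boxes with the same $(a,\ell)$ lie in different rows. Their components then have different top rows, hence are disjoint, and the resulting vectors have disjoint supports and are independent.
\end{enumerate}
With the transposed leg vectors, and your correct observation that arm and leg weights never coincide, this gives $2n$ independent weight vectors, and Fogarty finishes the proof. As written, though, neither the linearity step nor the independence step of your proposal holds.
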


\begin{proof}[Explanation of the convention]
The formula is the standard fixed-point tangent character. Nakajima writes the character in terms of variables $T_1,T_2$ and uses the convention
\[
  \sum_{s\in D(\lambda)}
  \left(
    T_1^{\ell_\lambda(s)+1}T_2^{-a_\lambda(s)}
    +
    T_1^{-\ell_\lambda(s)}T_2^{a_\lambda(s)+1}
  \right)
\]
\cite[Proposition~5.8]{Nakajima1999}. In this paper, the torus acts geometrically on $\Aff^2$ by
\[
  (q,t)\cdot(a,b)=(qa,tb),
\]
so the tangent weights of the horizontal and vertical coordinate directions are $q$ and $t$, respectively. Nakajima's formula is written with the first variable measuring the vertical direction and the second measuring the horizontal direction. Thus, in our notation,
\[
  T_1=t,
  \qquad
  T_2=q.
\]
Substitution gives
\[
  \sum_{s\in D(\lambda)}
  \left(
    t^{\ell_\lambda(s)+1}q^{-a_\lambda(s)}
    +
    t^{-\ell_\lambda(s)}q^{a_\lambda(s)+1}
  \right).
\]
The final statement follows because the sum contains two one dimensional characters for each of the $n$ boxes of $D(\lambda)$, matching the smoothness of $S^{[n]}$ of dimension $2n$ introduced in \Cref{thm:fogarty}.
\end{proof}

\begin{remark}
The same computation is often expressed using significant cleft pairs or using arrows in the Young diagram, see Chaput--Evain \cite[Section~2.1]{ChaputEvain2015}. The important point for the nested Hilbert scheme is that this arrow basis is compatible with the torus action and can be modified locally near a chosen corner.
\end{remark}

\subsection{The arrows as deformation}
\label{sec:ordinary-arrow-deformations}

The arrow in the Young diagram is a way to describe homomorphisms
\[
  I_\lambda\longrightarrow R/I_\lambda .
\]
A homogeneous map is determined by where it sends the monomial generators on the boundary of $I_\lambda$. In the diagram, an arrow represents that a boundary monomial is sent to the residue class of a monomial inside $D(\lambda)$. The condition that the arrow defines an $R$-linear map is exactly the condition that it respects the syzygies among the boundary generators. Thus the valid arrows represent elements of
\[
  \Hom_R(I_\lambda,R/I_\lambda).
\]
By \Cref{prop:tangent-hilb}, these homomorphisms are precisely the first-order embedded deformations of the subscheme defined by $I_\lambda$. The tail of an arrow lies on the monomial boundary of the ideal $I_\lambda$, and the head lies on a monomial that survives in the quotient $R/I_\lambda$.  The slope of the arrow records the torus character of the corresponding homomorphism.

For example, take $\lambda=(4,2,1)$ and the box $s=(1,0)$.  Then $a(s)=2$ and $\ell(s)=1$.  The two ordinary arrows attached to $s$ may be pictured as
\[
\begin{tikzpicture}[scale=0.65,baseline={(0,-0.2)}]
  \draw[gray!55] (0,0) grid (5,4);
  \foreach \x/\y in {0/0,1/0,2/0,3/0,0/1,1/1,0/2}{\draw[thick] (\x,\y) rectangle ++(1,1);}
  \fill[black] (1.5,0.5) circle (0.08);
  \draw[->,>=latex,thick] (1.5,2.5) -- (3.5,0.5);
  \draw[->,>=latex,thick] (4.5,0.5) -- (1.5,1.5);
\end{tikzpicture}
\]
The two characters are
\[
  q^{a(s)+1}t^{-\ell(s)}=q^3t^{-1},
  \qquad
  q^{-a(s)}t^{\ell(s)+1}=q^{-2}t^2.
\]
The ordinary Hilbert scheme is already controlled by a deformation theory rule on Young diagrams: boxes describe the quotient $R/I_\lambda$, boundary monomials describe generators of $I_\lambda$, and arrows describe first-order maps from the ideal to the quotient.  The nested Hilbert scheme will impose one more condition on these arrows, which is the compatibility with the inclusion of ideals.

\subsection{Example: the partition \texorpdfstring{$(2,1)$}{(2,1)}}

Let $\lambda=(2,1)$.  Then
\[
  D(\lambda)=\{(0,0),(1,0),(0,1)\},
  \qquad
  I_\lambda=(x^2,xy,y^2).
\]
The arms and legs are
\[
\begin{array}{c|c|c}
 s & a_\lambda(s) & \ell_\lambda(s) \\
\midrule
 (0,0) & 1 & 1 \\
 (1,0) & 0 & 0 \\
 (0,1) & 0 & 0
\end{array}
\]
Therefore
\[
\operatorname{ch}_T T_{I_\lambda}S^{[3]}
=
q^2t^{-1}+q^{-1}t^2+2q+2t.
\]
The six weights agree with the dimension $2\cdot 3=6$.

\subsection{The tautological bundle}

The universal family
\[
  \mathcal Z_n\subset S^{[n]}\times S
\]
is finite flat of degree $n$ over $S^{[n]}$. Let
\[
  p:S^{[n]}\times S\longrightarrow S^{[n]}
\]
be the projection. The tautological bundle is
\[
  \mathcal V_n=p_*\mathcal O_{\mathcal Z_n}.
\]
It is a vector bundle of rank $n$, and its fibre at $[I]\in S^{[n]}$ is
\[
  (\mathcal V_n)_{[I]}\cong R/I.
\]
For the case of affine plane see Nakajima
\cite[Section~4.3]{Nakajima1999}, and for tautological sheaves on Hilbert schemes
of points on surfaces see Lehn \cite{Lehn1999}.

The universal family $\mathcal Z_n$ introduced here is also the central object of the next chapter.  In \Cref{chap:nested-geometry} we realise the nested Hilbert scheme $S^{[n,n+1]}$ both as a modification of the universal family $\mathcal Z_{n+1}$ over $S^{[n+1]}$ and as the blow-up of $S^{[n]}\times S$ along $\mathcal Z_n$, and we read off its torus-fixed points from the same Young diagram combinatorics.

\section{The nested Hilbert scheme and the universal family}
\label{chap:nested-geometry}

In this section we study the global geometry behind the nested Hilbert scheme
\[
  S^{[n,n+1]}=\Hilb^{n,n+1}(\Aff^2).
\]
There are two natural maps out of $S^{[n,n+1]}$.  One map goes to the universal family over $S^{[n+1]}$ and is birational.  The other goes to $S^{[n]}\times S$ and realizes the nested Hilbert scheme as a blow-up along the universal family over $S^{[n]}$.  The two maps are closely related, but they have different local geometric structure.

\subsection{Definition and residual point}

A point of $S^{[n,n+1]}$ is a pair of ideals
\[
  I\subset J\subset R,
  \qquad
  \dim_\CC R/I=n+1,
  \qquad
  \dim_\CC R/J=n.
\]
There is an exact sequence
\[
  0\longrightarrow J/I\longrightarrow R/I\longrightarrow R/J\longrightarrow 0.
\]
Since the lengths of $R/I$ and $R/J$ differ by one, $J/I$ is an $R$-module of length one. Hence it is supported at a unique closed point $p\in S$, and
\[
  J/I\cong R/\mathfrak m_p
\]
as an $R$-module, where $\mathfrak m_p$ is the maximal ideal of $p$.  This point is called the residual point of the nested pair and is denoted
\[
  \res(I,J)=p.
\]
Geometrically, $I$ defines the larger subscheme $Z$ of length $n+1$, $J$ defines the smaller subscheme $Y$ of length $n$, and the quotient $J/I$ is the residual piece of length one of $Z$ not contained in $Y$.

\begin{theorem}[Cheah]
\label{thm:cheah-smooth}
If $S$ is a smooth surface, then $S^{[n,n+1]}$ is smooth and irreducible of dimension $2n+2$.
\end{theorem}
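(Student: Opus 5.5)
Since $S^{[n,n+1]}$ is covered by opens pulled back from $\Aff^2$-charts, I will work with $S=\Aff^2$ and argue in three steps: a dimension count along an open stratum, a torus reduction for smoothness, and a dimension bound on the bad locus for irreducibility.

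\textbf{Step 1: the generic locus.} Consider the open subset $U\subset S^{[n,n+1]}$ where $Z=\Spec R/I$ is reduced, i.e.\ $n+1$ distinct points. A point of $U$ is such a configuration together with a choice of one of the points as the residual point $\res(I,J)$. Hence $U$ is a finite étale cover of the distinct-points locus of $S^{[n+1]}$. By \Cref{thm:fogarty}, $U$ is therefore smooth of dimension $2(n+1)=2n+2$.

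\textbf{Step 2: smoothness.} I will bound the Zariski tangent space at every point by $2n+2$. Using the compatibility kernel that the paper identifies with $T_{(I,J)}S^{[n,n+1]}$ (to be proved in \Cref{thm:nested-kernel}), it suffices to show
\[
\dim\ker\bigl(\Hom_R(I,R/I)\oplus\Hom_R(J,R/J)\xrightarrow{\delta}\Hom_R(I,R/J)\bigr)=2n+2 .
\]
Upper semicontinuity of the tangent dimension, together with properness of the torus orbit closures under a generic contracting one-parameter subgroup, reduces this to the $T$-fixed points $(I_\lambda,I_{\lambda\setminus c})$. Concretely, the locus where the tangent dimension exceeds $2n+2$ is closed and $T$-stable. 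Every point of $S^{[n,n+1]}$ has a limit under a contracting cocharacter: it exists over the punctual part by the projectivity of Hilbert schemes and Hilbert–Chow, and the support flows to the origin. So a nonempty bad locus would contain a fixed point.

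At a fixed point the kernel is spanned by homogeneous elements. I then count compatible pairs of arrows in the arrow basis of \Cref{sec:ordinary-arrow-deformations}. Each arrow of $I_\lambda$ either induces a compatible arrow of $I_{\lambda\setminus c}$ or is shortened near $c$. The expected outcome is $2n+2$ weights: the $2n$ arm-leg weights of $\lambda\setminus c$ suitably modified, plus two extra directions coming from moving the residual point, with weights $q$ and $t$ near the corner. This gives smoothness of dimension $2n+2$ everywhere. Alternatively, if the combinatorics is too heavy, I would quote Cheah's local computation \cite{Cheah1998}.

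\textbf{Step 3: irreducibility.} Since the scheme is smooth, irreducibility is equivalent to connectedness plus equidimensionality, and it suffices to show that the complement of $U$ has dimension $<2n+2$. Then every component, having dimension $2n+2$, must meet the irreducible $U$, whose closure is therefore everything. The complement maps to the non-reduced locus of $S^{[n+1]}$, which has dimension $2n+1$ (Briançon). The fibres of the map $(I,J)\mapsto I$ are parametrized by socle lines; over a generic non-reduced point, which has a single double point, they are finite. The estimate is therefore that each stratum of the complement with socle dimension $k$ contributes at most (dimension of the stratum) $+\,(k-1)$, and I need this to be $\le 2n+1$. For the punctual loci this follows from Briançon-type bounds on the dimension of loci with larger socle.

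\textbf{Main obstacle.} I expect the hardest part to be the tangent count at fixed points in Step 2, namely verifying that the shortening rule yields exactly $2n+2$ compatible pairs for every $\lambda$ and every corner $c$. The socle-stratum dimension estimate in Step 3 is the second delicate point.
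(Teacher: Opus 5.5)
The paper does not prove this statement; it quotes it from \cite[Theorem~3.2.2]{Cheah1998}. Your outline therefore has to stand on its own. Its architecture is the standard one: generic locus, fixed-point tangent count with semicontinuity, and a dimension bound on the bad locus. But as written it has a genuine gap, and Steps~2 and~3 lean on each other circularly. Your contracting-cocharacter argument in Step~2 only shows $\dim T_x S^{[n,n+1]}\le 2n+2$ at every point $x$, since the locus where the tangent dimension exceeds $2n+2$ is empty. An upper bound on tangent spaces never gives smoothness by itself: you also need $\dim\mathcal O_{S^{[n,n+1]},x}\ge 2n+2$. Nothing in Steps~1--3 rules out an extra component of dimension at most $2n+1$, possibly non-reduced, lying in the complement of $U$ with embedding dimension at most $2n+2$. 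The toy model $\Aff^1\sqcup\Spec\CC[\eps]/(\eps^2)$ with $U=\Aff^1$ satisfies the analogues of all three of your steps and is neither smooth nor irreducible. Step~3 then deduces irreducibility from ``every component has dimension $2n+2$'', which is exactly the unproved part of Step~2. Two smaller points: the fixed-point count is only announced, though within this paper you can take it from \Cref{thm:nested-weights}. And for a general smooth $S$ the reduction to $\Aff^2$ must be \'etale-local, since $S$ need not be covered by opens of $\Aff^2$.

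The missing ingredient is a lower bound on local dimension. In your torus framework the cheapest way to get it is to show that every fixed point lies on the main component $\overline U$. Regard $P=D(\lambda)$ as a set of lattice points of $\Aff^2$. For $s\neq0$, the pair of ideals $\bigl(I(sP),I(s(P\setminus\{c\}))\bigr)$ of the scaled point sets lies in $U$. As $s\to0$ it tends to $(I_\lambda,I_{\lambda\setminus c})$. Indeed, the limit ideal contains the top-degree forms $x^iy^j$ of $\prod_{k<i}(x-k)\prod_{l<j}(y-l)\in I(P)$ for $(i,j)\notin D(\lambda)$, and these already generate an ideal of colength $|\lambda|$; the same holds for $\lambda\setminus c$. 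Write $X=S^{[n,n+1]}$. At a fixed point $x$ you then have $2n+2\le\dim\mathcal O_{X,x}\le\dim T_xX=2n+2$, so $\mathcal O_{X,x}$ is regular, hence a domain, and no component other than $\overline U$ passes through $x$. Now apply your contracting-cocharacter argument to the closed $T$-stable set $\mathrm{Sing}(X)\cup X'$, where $X'$ is the union of the components other than $\overline U$. If this set were nonempty it would contain a fixed point, which is impossible. This gives smoothness and irreducibility for $\Aff^2$ at once, without Step~3.

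Alternatively, identify $X$ with the scheme of rank-one quotients of $\mathcal I_{\mathcal Z_n}$, globalizing \Cref{prop:phi-fiber-generators}. Since $\mathcal Z_n$ is finite flat over $S^{[n]}$, it is Cohen--Macaulay of codimension $2$ in $S^{[n]}\times S$. Hilbert--Burch then gives local presentations $\mathcal O^{r-1}\to\mathcal O^{r}\to\mathcal I_{\mathcal Z_n}\to0$. So $X$ is locally cut out of $S^{[n]}\times S\times\PP^{r-1}$ by $r-1$ equations, and every component has dimension at least $2n+2$. On that route Step~3 is needed, but crude bounds suffice. The punctual Hilbert scheme of length $m$ has dimension $m-1$ (Briançon), and $\dim\Soc_p(R/I)\le m-1$. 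Together these give dimension at most $2+(m-1)+(m-2)+2(n+1-m)=2n+1$ for the stratum where $Z$ has length $m\ge2$ at the residual point, so no finer socle-stratum estimate is needed.
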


This is the smooth nested case in Cheah's classification of smooth nested Hilbert schemes \cite[Theorem~3.2.2]{Cheah1998}.  In the torus-fixed computations later, this smoothness will be a dimension check: since $S^{[n,n+1]}$ is smooth of dimension $2n+2$, the tangent space at each fixed point should have dimension $2n+2$. The tangent character formula will give exactly $2n+2$ one dimensional $T$-weight spaces.

\subsection{The map to the universal family over \texorpdfstring{$S^{[n+1]}$}{S[n+1]}}
\label{sec:birational-universal-family}

Let
\[
  \mathcal Z_{n+1}\subset S^{[n+1]}\times S
\]
be the universal family.  A point of $\mathcal Z_{n+1}$ is a pair $(Z,p)$, where $Z\in S^{[n+1]}$ and $p\in \Supp(Z)$.

There is a natural morphism
\[
  \rho:S^{[n,n+1]}\longrightarrow \mathcal Z_{n+1}
\]
given by
\[
  \rho(I\subset J)=(I,\res(I,J)).
\]
Here the first component is the larger subscheme of length $(n+1)$, and the second component is the residual point.

\begin{proposition}[Birational map to the universal family]
\label{prop:rho-birational}
The morphism
\[
  \rho:S^{[n,n+1]}\longrightarrow \mathcal Z_{n+1}
\]
is birational.
\end{proposition}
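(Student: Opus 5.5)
Our plan is to find a dense open subset $U\subset\mathcal Z_{n+1}$ over which $\rho$ is an isomorphism, and then use irreducibility on both sides. Let $U\subset\mathcal Z_{n+1}$ be the locus of pairs $(Z,p)$ with $Z$ a reduced union of $n+1$ distinct points and $p\in\Supp(Z)$. Over the open locus $S^{[n+1]}_\circ$ of reduced subschemes, the Hilbert--Chow morphism is an isomorphism. The universal family restricted there is finite étale of degree $n+1$, since it is locally the disjoint union of the $n+1$ sections given by the points. Hence $U$ is open in $\mathcal Z_{n+1}$ and smooth of dimension $2n+2$. It is dense because $\mathcal Z_{n+1}$ is irreducible of dimension $2n+2$. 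This irreducibility is standard, as $\mathcal Z_{n+1}$ is the closure of its reduced locus. Alternatively we can avoid it: it suffices that $\rho^{-1}(U)$ is dense in $S^{[n,n+1]}$, which follows from \Cref{thm:cheah-smooth}, together with the fact that $\rho$ is surjective with image closed, so the image is all of $\mathcal Z_{n+1}$.

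Next we construct an inverse $\sigma:U\to\rho^{-1}(U)$. For $(Z,p)\in U$ with $Z=\{p_1,\dots,p_{n+1}\}$ and $p=p_k$, set $Y=Z\setminus\{p_k\}$; in ideals, $J=I:\mathfrak m_p$, or equivalently $J=\prod_{i\neq k}\mathfrak m_{p_i}$. Pointwise, $\rho\circ\sigma=\mathrm{id}$. Conversely, if $I\subset J$ with $Z$ reduced and $\res=p$, then $J/I$ is the line in $R/I\cong\prod\CC_{p_i}$ supported at $p$, which forces $J=I:\mathfrak m_p$; so $\rho$ is injective over $U$. To make this an isomorphism of schemes and not merely a bijection, we do it in families. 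Over the étale cover $U\to S^{[n+1]}_\circ$, the universal subscheme splits as $\mathcal Z|_U=\Gamma\sqcup\mathcal Y$, where $\Gamma$ is the tautological section given by $p$. The complement $\mathcal Y$ is a closed and open subscheme, finite flat of degree $n$ over $U$, so it defines a morphism $U\to S^{[n]}$. Together with the inclusion $\mathcal Y\subset\mathcal Z$, this gives a morphism $\sigma:U\to S^{[n,n+1]}$ via the functor of points of the nested Hilbert scheme as an incidence subscheme.

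We then check the compositions on $T$-valued points. The composition $\rho\circ\sigma$ is the identity because the residual point of $\mathcal Y\subset\mathcal Z$ is $\Gamma$. For $\sigma\circ\rho$ on $\rho^{-1}(U)$, take a flat family of pairs $\mathcal I\subset\mathcal J$ over a base $B$ mapping into $U$. The quotient $\mathcal J/\mathcal I$ is a line bundle on $\mathcal Z_B$ supported on the residual section. Since $\mathcal Z_B$ is étale over $B$, it decomposes into components, and $\mathcal Y_B$ must be the complement of that section; hence $\sigma\circ\rho=\mathrm{id}$. Combining this with density gives birationality.

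We expect the main obstacle to be this scheme-theoretic step. The pointwise inverse is obvious, but checking that $\rho$ is a morphism whose restriction is an isomorphism, rather than a bijective morphism, requires the functorial description of the residual point. We would realize the residual point via the support of the length-one sheaf $\mathcal J/\mathcal I$, i.e.\ its Fitting ideal. Alternatively, since both sides are smooth over $U$ (using \Cref{thm:cheah-smooth}), we could invoke Zariski's main theorem to conclude that a bijective morphism onto a normal variety is an isomorphism.
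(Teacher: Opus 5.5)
Your proposal is correct and follows essentially the same route as the paper: restrict to the reduced locus, where $\mathcal Z_{n+1}$ is finite \'etale, split off the tautological section to build the inverse $\sigma$, and conclude using Cheah's irreducibility. Your check of $\sigma\circ\rho$ on $B$-valued points is, if anything, more careful than the paper's pointwise check. One small point: justifying the irreducibility of $\mathcal Z_{n+1}$ by saying it is ``the closure of its reduced locus'' is circular, since that is the density claim itself. Rely instead on your alternative (surjectivity of $\rho$ plus irreducibility of $S^{[n,n+1]}$), or on the paper's argument: your $U$ (the paper's $\mathcal V$) is dense because it is the preimage of a dense open set under the finite flat, hence open, map $\mathcal Z_{n+1}\to S^{[n+1]}$.
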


\begin{proof}
Let
\[
  \operatorname{HC}_{n+1}:S^{[n+1]}\longrightarrow \operatorname{Sym}^{n+1}S
\]
be the Hilbert--Chow morphism recalled in \Cref{sec:hilbert-schemes-of-points}. Let
$W\subset \operatorname{Sym}^{n+1}S$ be the open locus of reduced cycles
\[
  [p_0]+\cdots+[p_n],
  \qquad p_i\neq p_j \text{ for } i\neq j.
\]
This is the complement of the image of the big diagonal in $S^{n+1}$, hence is dense and open. Put
\[
  U=\operatorname{HC}_{n+1}^{-1}(W)\subset S^{[n+1]}.
\]
Over $W$, the Hilbert--Chow morphism is an isomorphism, since a reduced
subscheme of length $(n+1)$ is uniquely determined by its support cycle
\cite[Theorem~1.5]{Nakajima1999}. Thus $U$ is the dense open locus of
subschemes
\[
  Z=\{p_0,\ldots,p_n\}
\]
with $p_0,\ldots,p_n$ distinct.

Let
\[
  \mathcal V=\mathcal Z_{n+1}\times_{S^{[n+1]}}U
  \subset \mathcal Z_{n+1}
\]
be the universal family restricted to $U$. A point of $\mathcal V$ is a pair
$(Z,p_i)$, where $Z=\{p_0,\ldots,p_n\}$ is reduced and $p_i\in Z$. Over $U$,
the universal family
\[
  \mathcal Z_{n+1}|_U\longrightarrow U
\]
is finite \'etale of degree $n+1$: it is a finite flat family whose fibres are reduced
sets of $n+1$ distinct points. After pulling this family back to $\mathcal V$, the marked point gives a
tautological section. Since a section of a finite \'etale morphism has an image that is open and closed, its complement is again finite flat, now of degree $n$. Fibrewise this
complement is
\[
  Z\setminus\{p_i\}.
\]
Therefore deleting the marked point defines a morphism
\[
  \sigma:\mathcal V\longrightarrow S^{[n,n+1]},
  \qquad
  (Z,p_i)\longmapsto (Z\setminus\{p_i\}\subset Z).
\]

By construction, $\rho\circ\sigma=\mathrm{id}_{\mathcal V}$. Conversely, if a
nested pair maps to $(Z,p_i)\in\mathcal V$, then $Z$ is a reduced set of
$n+1$ distinct points, and the only subscheme of length $n$ of $Z$ with residual
point $p_i$ is $Z\setminus\{p_i\}$. Hence $\sigma\circ\rho$ is the identity over
$\mathcal V$. Thus $\rho$ restricts to an isomorphism
\[
  \rho^{-1}(\mathcal V)\cong \mathcal V.
\]
Since $\mathcal Z_{n+1}\to S^{[n+1]}$ is finite flat and $U$ is dense open,
$\mathcal V$ is dense open in $\mathcal Z_{n+1}$, and since
$S^{[n,n+1]}$ is irreducible by \Cref{thm:cheah-smooth}, this proves that $\rho$ is birational.
\end{proof}

\begin{remark}
The map to the universal family is birational, not an isomorphism in general.  The universal family remembers the residual point, while the nested Hilbert scheme remembers the residual point together with the way a piece of length one sits inside the local algebra of the larger subscheme. In the fibre description in \Cref{prop:rho-fiber-socle}, we will explain this difference.
\end{remark}

\begin{example}
Consider the length-three subscheme of $\Aff^2$ defined by
\[
  I=(x,y)^2=(x^2,xy,y^2).
\]
Then
\[
  R/I=\Span_\CC\{1,x,y\}
\]
is supported at the origin. We now describe the intermediate ideals
\[
  I\subset J,
  \qquad
  \dim_\CC R/J=2.
\]
Such an ideal $J$ is the same as a one dimensional $R$-submodule
\[
  J/I\subset R/I.
\]

Let $L\subset R/I$ be a one dimensional vector subspace. Write a nonzero
element of $L$ as
\[
  v=a+bx+cy
  \qquad
  \text{mod } I.
\]
For $L$ to be an $R$-submodule, it must be stable under multiplication by
$x$ and $y$. Since
\[
  x\cdot v=ax,
  \qquad
  y\cdot v=ay
  \qquad
  \text{mod } I,
\]
if $a\neq 0$, then $L$ would have to contain both $x$ and $y$, which is
impossible for a line. Hence $a=0$, and so
\[
  L\subset \Span_\CC\{x,y\}.
\]
Conversely, every line in $\Span_\CC\{x,y\}$ is killed by both $x$ and $y$
modulo $I$, and is therefore an $R$-submodule of $R/I$.

Thus intermediate ideals $I\subset J$ with $\dim_\CC R/J=2$ are parametrized
by lines
\[
  L\subset \Span_\CC\{x,y\}.
\]
If
\[
  L=\Span_\CC\{ax+by\}
\]
for some nonzero pair $(a,b)$, then the corresponding ideal is
\[
  J=I+(ax+by).
\]
Therefore these intermediate ideals are parametrized by
\[
  \PP(\Span_\CC\{x,y\})\cong \PP^1.
\]

For every such $J$, the quotient $J/I$ is killed by the maximal ideal
$(x,y)$, so its residual support is the origin. Therefore all these nested pairs
map under $\rho$ to the same point
\[
  (I,0)\in \mathcal Z_3.
\]
Hence, the map
$\rho$ is not injective on closed points. In particular, $\rho$ is not an
isomorphism.
\end{example}

\subsection{Socle fibres of the map to the universal family}
\label{sec:socle-fibres}

The previous counterexample to isomorphism is a motivation for this subsection. We now study the fibres of the map
\[
  \rho:S^{[n,n+1]}\longrightarrow \mathcal Z_{n+1}
\]
from a local algebra angle. Let $I\subset R$ be an ideal of finite length and let
$p\in \operatorname{Supp}(R/I)$. Denote by $\mathfrak m_p\subset R$ the maximal
ideal corresponding to $p$.

\begin{definition}
The local socle of $R/I$ at $p$ is
\[
  \Soc_p(R/I)
  =
  \{\bar f\in R/I:\mathfrak m_p\bar f=0\}.
\]
\end{definition}

Thus $\Soc_p(R/I)$ is the subspace of elements of $R/I$ killed by all functions
vanishing at $p$. It measures the possible submodules of length one of the local
algebra $R/I$ supported at $p$. The following proposition is a modification of the discussion of the morphism that appeared in \cite[Section~3.5]{Haiman2001}.

\begin{proposition}[Socle fibre of the map to the universal family]
\label{prop:rho-fiber-socle}
Let $(I,p)\in \mathcal Z_{n+1}$, so that $I\subset R$ has colength $n+1$ and
$p\in \operatorname{Supp}(R/I)$. Then the fibre of
\[
  \rho:S^{[n,n+1]}\longrightarrow \mathcal Z_{n+1}
\]
over $(I,p)$ is naturally identified with
\[
  \rho^{-1}(I,p)
  \cong
  \PP(\Soc_p(R/I)),
\]
where $\PP(V)$ denotes the projective space of lines in the vector space $V$.
\end{proposition}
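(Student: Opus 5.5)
The plan is to identify the fibre first on closed points and then check that the identification is natural. A closed point of $\rho^{-1}(I,p)$ is an ideal $J$ with $I\subset J\subset R$, $\dim_\CC R/J=n$, and $\res(I,J)=p$. Passing to the quotient, such $J$ correspond bijectively to $R$-submodules $L=J/I\subset R/I$ of $\CC$-dimension one. The residual point is the support of $L$. As in the discussion defining $\res(I,J)$, a one dimensional $R$-module is isomorphic to $R/\mathfrak m_q$ for a unique closed point $q$, so the condition $\res(I,J)=p$ says exactly that $\mathfrak m_p L=0$.

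Next I would show that the one dimensional submodules $L$ with $\mathfrak m_pL=0$ are exactly the lines in $\Soc_p(R/I)$. If $L=\CC\bar f$ is such a submodule, then $\mathfrak m_p\bar f=0$, so $\bar f\in\Soc_p(R/I)$. Conversely, let $\bar f\in\Soc_p(R/I)$ be nonzero. Every $g\in R$ can be written $g=g(p)+(g-g(p))$ with $g-g(p)\in\mathfrak m_p$, so $g\bar f=g(p)\bar f$. Hence $\CC\bar f$ is an $R$-submodule killed by $\mathfrak m_p$, and $J=I+Rf$ has colength $n$ with residual point $p$. This gives a bijection between closed points of $\rho^{-1}(I,p)$ and $\PP(\Soc_p(R/I))$, with inverse $[\bar f]\mapsto I+Rf$. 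The computation for $I=(x,y)^2$ in the preceding example is the special case where the socle is $\Span_\CC\{x,y\}$.

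The main obstacle is to justify the word ``naturally'', that is, to upgrade this bijection to an identification of schemes rather than only of point sets. I would do this with the functor of points. For a base $B$, I would compare families of $\mathcal O_B$-submodules $\mathcal L\subset (R/I)\otimes\mathcal O_B$ such that
\begin{itemize}
\item the quotient by $\mathcal L$ is locally free of rank $n$, and
\item $\mathcal L$ is killed by $\mathfrak m_p$,
\end{itemize}
with rank-one subbundles of the constant bundle $\Soc_p(R/I)\otimes\mathcal O_B$. The second condition is linear in $\mathcal L$, so these two kinds of families agree. This exhibits the fibre as the Grassmannian of lines $\PP(\Soc_p(R/I))$.

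The residual condition has to be imposed scheme-theoretically in this family version. For that I would use that the residual map is $S^{[n,n+1]}\to S$ given by the support of the rank-one module $J/I$. Its fibre over $p$ is cut out precisely by the annihilation of $J/I$ by $\mathfrak m_p$. If the full scheme-theoretic argument becomes heavy, I would state the identification at the level of closed points, or reduced fibres, which is what the later torus-fixed analysis uses.
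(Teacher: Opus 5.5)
Your proposal is correct and follows essentially the same route as the paper. You identify closed points of the fibre with one dimensional submodules $J/I\subset R/I$ annihilated by $\mathfrak m_p$, show via $g\bar f=g(p)\bar f$ that these are exactly the lines in $\Soc_p(R/I)$, and then pass to the functor of points, where annihilation by $\mathfrak m_p$ is a linear condition cutting out $\PP(\Soc_p(R/I))$. Your remark that the residual map comes from the scalar action of $R$ on the rank one module $J/I$, so that its fibre over $p$ is cut out by $\mathfrak m_p(J/I)=0$, makes explicit a step the paper leaves implicit.
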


\begin{proof}
A point in the fibre over $(I,p)$ is an ideal $J$ such that
\[
  I\subset J,
  \qquad
  \dim_\CC R/J=n,
  \qquad
  \operatorname{Supp}(J/I)=\{p\}.
\]
Since $\dim_\CC R/I=n+1$, the condition $\dim_\CC R/J=n$ is equivalent to
\[
  \dim_\CC J/I=1.
\]
Thus such a $J$ is the same thing as a one dimensional vector subspace
\[
  L=J/I\subset R/I
\]
which is an $R$-submodule and is supported at $p$.

The condition that $L$ is supported at $p$ is exactly the condition that
$\mathfrak m_p$ kills $L$:
\[
  \mathfrak m_pL=0.
\]
Indeed, a one dimensional $R$-module supported at $p$ is isomorphic to
$R/\mathfrak m_p$ as an $R$-module. Therefore $L$ is a line contained in
\[
  \Soc_p(R/I)
  =
  \{\bar f\in R/I:\mathfrak m_p\bar f=0\}.
\]

Conversely, let $L\subset \Soc_p(R/I)$ be a line. Since $\mathfrak m_pL=0$, for
every $r\in R$ the element $r-r(p)$ lies in $\mathfrak m_p$ and therefore acts
trivially on $L$. Hence $r$ acts on $L$ by the scalar $r(p)$, so $L$ is an
$R$-submodule of $R/I$. Let $J\subset R$ be the inverse image of $L$ under the
quotient map
\[
  R\longrightarrow R/I.
\]
Then $J$ is an ideal, $I\subset J$, and $J/I=L$ has dimension one and is supported
at $p$. Hence $J$ defines a point of the fibre $\rho^{-1}(I,p)$.

This gives a bijection between intermediate ideals in the fibre and lines in
$\Soc_p(R/I)$. This bijection upgrades to an isomorphism of schemes. The fibre represents the functor sending a test scheme $T$ to the line subbundles $\mathcal L\subset (R/I)\otimes_\CC\mathcal O_T$ with $\mathfrak m_p\mathcal L=0$, that is, line subbundles of the trivial bundle annihilated by every element of $\mathfrak m_p$. Multiplication by a fixed element of $\mathfrak m_p$ is a constant linear endomorphism of $R/I$, so the condition $\mathfrak m_pL=0$ is the vanishing of finitely many linear maps on the tautological line $\mathcal O(-1)\subset (R/I)\otimes\mathcal O$ over $\PP(R/I)$. Each such condition is a section of $(R/I)\otimes\mathcal O(1)$, hence linear in the homogeneous coordinates. These linear equations cut out the reduced linear subspace
\[
  \PP(\Soc_p(R/I))\subset \PP(R/I),
\]
and the fibre is naturally isomorphic to $\PP(\Soc_p(R/I))$ as a projective
scheme.
\end{proof}

In this paper, for monomial ideals, the socle has a direct Young diagram interpretation.

\begin{proposition}[Corners are monomial socle directions]
\label{prop:corners-socle}
Let $\lambda$ be a partition and let $I_\lambda\subset R$ be the corresponding
monomial ideal. Then $I_\lambda$ is supported at the origin, and
\[
  \Soc_0(R/I_\lambda)
  =
  \Span_\CC\{x^iy^j:(i,j)\in C(\lambda)\},
\]
where $C(\lambda)$ is the set of removable corners of $D(\lambda)$. In particular,
the torus-invariant (not necessarily point-wise torus-fixed) socle lines are exactly the lines spanned by the removable
corner monomials.
\end{proposition}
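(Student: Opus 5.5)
The plan is to compute the socle directly in the monomial basis of $R/I_\lambda$ given by \Cref{prop:partitions-monomial-ideals}, using that the socle is a $T$-stable subspace. First, support at the origin: $D(\lambda)$ is finite, so $x^{\lambda_0}\in I_\lambda$ and $y^r\in I_\lambda$, where $r$ is the number of rows. Hence $V(I_\lambda)=\{0\}$, and $\mathfrak m_0=(x,y)$. Since $\mathfrak m_0$ is generated by $x$ and $y$, an element $\bar f\in R/I_\lambda$ lies in $\Soc_0(R/I_\lambda)$ if and only if $x\bar f=0$ and $y\bar f=0$.

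Second, I reduce to monomials. The maps given by multiplication by $x$ and by $y$ on $R/I_\lambda$ are $T$-equivariant up to a character twist, so their common kernel is a $T$-stable subspace. Alternatively, and more concretely, write $\bar f=\sum_{(i,j)\in D(\lambda)}c_{ij}x^iy^j$. Multiplication by $x$ sends the basis monomial $x^iy^j$ either to the basis monomial $x^{i+1}y^j$ or to $0$, according to whether $(i+1,j)\in D(\lambda)$. Distinct boxes go to distinct monomials, so
\[
x\bar f=\sum_{(i+1,j)\in D(\lambda)}c_{ij}x^{i+1}y^j .
\]
This vanishes if and only if $c_{ij}=0$ whenever $(i+1,j)\in D(\lambda)$. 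Similarly, $y\bar f=0$ if and only if $c_{ij}=0$ whenever $(i,j+1)\in D(\lambda)$. Therefore $\bar f$ is in the socle exactly when it is supported on boxes $(i,j)$ with both $(i+1,j)\notin D(\lambda)$ and $(i,j+1)\notin D(\lambda)$. By definition these are the boxes in $C(\lambda)$, which gives the displayed equality.

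Third, the statement about invariant lines. Each corner monomial spans a $T$-stable line, because $T$ scales monomials. Conversely, the characters $q^{-i}t^{-j}$ attached to distinct boxes are pairwise distinct, so the socle is a multiplicity-free $T$-representation. A $T$-stable line must lie in a single weight space, so it is one of the corner lines. I would add a remark that the $T$-action on such a line is nontrivial unless $c=(0,0)$; this is why the statement says the lines are invariant but not pointwise fixed.

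The only step needing care is the injectivity claim in the second step: that multiplication by $x$ sends distinct surviving basis monomials to distinct monomials, so no cancellation occurs. This is immediate from exponent bookkeeping, so I do not expect a real obstacle.
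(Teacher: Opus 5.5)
Your proposal is correct and follows the paper's proof: describe the socle in the monomial basis using the conditions $(i+1,j)\notin D(\lambda)$ and $(i,j+1)\notin D(\lambda)$, then use that distinct monomials have distinct characters to identify the invariant lines. Your explicit no-cancellation computation of $x\bar f$ and your check that $x^{\lambda_0},y^r\in I_\lambda$ fill in two steps the paper only asserts. For the reverse inclusion in $V(I_\lambda)=\{0\}$ you also need $\lambda$ nonempty, so that $I_\lambda\subseteq(x,y)$.
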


\begin{proof}
The quotient $R/I_\lambda$ has monomial basis
\[
  \{x^iy^j:(i,j)\in D(\lambda)\}.
\]
A basis monomial $x^iy^j$ is killed by $x$ modulo $I_\lambda$ precisely when
\[
  (i+1,j)\notin D(\lambda),
\]
and it is killed by $y$ modulo $I_\lambda$ precisely when
\[
  (i,j+1)\notin D(\lambda).
\]
Thus $x^iy^j$ is killed by the maximal ideal $(x,y)$ precisely when $(i,j)$ is a
removable corner of $D(\lambda)$.

Since $R/I_\lambda$ has a monomial basis, a linear combination is killed by both
$x$ and $y$ exactly when all its monomial terms are killed by both $x$ and $y$.
Therefore
\[
  \Soc_0(R/I_\lambda)
  =
  \Span_\CC\{x^iy^j:(i,j)\in C(\lambda)\}.
\]
Finally, distinct monomials have distinct $T$-characters. Hence the
torus-invariant lines in the socle are precisely the coordinate lines spanned by
the removable corner monomials.
\end{proof}

\subsection{The blow-up description and its fibres}
\label{sec:blowup-description}

In this subsection, we let $\PP(V)$ denote the projective space of lines in the vector space $V$. Equivalently, $\PP(V^\vee)$ parametrizes one dimensional quotients of $V$.

There is another natural map in contrast to \Cref{prop:rho-birational}, but this time we keep the smaller subscheme and the residual point:
\[
  \phi:S^{[n,n+1]}\longrightarrow S^{[n]}\times S,
  \qquad
  \phi(I\subset J)=(J,\res(I,J)).
\]
The universal family
\[
  \mathcal Z_n\subset S^{[n]}\times S
\]
consists of those pairs $(J,p)$ with $p\in \Supp(R/J)$.

\begin{theorem}[Blow-up description]
\label{thm:blowup}
For a smooth connected surface $S$, the morphism
\[
  \phi:S^{[n,n+1]}\longrightarrow S^{[n]}\times S
\]
is the blow-up of $S^{[n]}\times S$ along the universal family $\mathcal Z_n$:
\[
  S^{[n,n+1]}
  \cong
  \Bl_{\mathcal Z_n}(S^{[n]}\times S).
\]
\end{theorem}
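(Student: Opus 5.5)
The plan is to realize $S^{[n,n+1]}$ as a projectivization over $S^{[n]}\times S$ and then identify that projectivization with the blow-up via the Rees algebra. Let $\mathcal J\subset\mathcal O_{S^{[n]}\times S}$ be the ideal sheaf of the universal family $\mathcal Z_n$. Write $q:S^{[n]}\times S\times S\to S^{[n]}\times S$ for the projection forgetting the last factor. The first factor $S$ carries the point of $\mathcal Z_n$, and the second carries the residual point $p$. Let $\Delta$ be the diagonal of the two $S$ factors.

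Consider the sheaf
\[
\mathcal F=q_*\bigl(\mathcal I_{\mathcal Z_n\times S}\otimes\mathcal O_{\Delta}\bigr).
\]
Its fibre at $(J,p)$ is $J/\mathfrak m_pJ$. I will then show that $\mathcal F\cong\mathcal J$. The point is that restricting the universal ideal to the diagonal is the same as viewing $J$ as a sheaf on $S$ supported along the graph, and the restriction of $\mathcal I_{\mathcal Z_n}$ to the locus $p$ in $S^{[n]}\times S$ is $\mathcal J$ itself.

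Next I identify $S^{[n,n+1]}$ with $\PP(\mathcal F^\vee)$, the scheme of rank-one quotients of $\mathcal F$ in Grothendieck's sense. A nested pair $I\subset J$ with residual point $p$ is the same as an $R$-linear surjection $J\to J/I\cong R/\mathfrak m_p$. Such a map factors through $J/\mathfrak m_pJ$, which is the dual picture to \Cref{prop:rho-fiber-socle}. I will do this functorially over a test scheme $B$. A $B$-family of nested pairs gives a flat family $\mathcal J_B$ and a line bundle quotient $\mathcal J_B/\mathcal I_B$ supported on the graph of $\res:B\to S$, hence a quotient of $\mathcal F_B$. Conversely, a rank-one quotient $\mathcal F_B\to\mathcal L$ gives the kernel $\mathcal I_B$ of $\mathcal J_B\to\mathcal L$. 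Here $\mathcal I_B$ has flat colength $n+1$ because $\mathcal L$ is a line bundle, and flatness of $\mathcal O/\mathcal I_B$ follows from the exact sequence with $\mathcal O/\mathcal J_B$. This gives $S^{[n,n+1]}\cong\PP(\mathcal J)=\operatorname{Proj}\operatorname{Sym}\mathcal J$ over $S^{[n]}\times S$.

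Finally, the blow-up is $\operatorname{Proj}\bigoplus_k\mathcal J^k$, which is the closed subscheme of $\operatorname{Proj}\operatorname{Sym}\mathcal J$ cut out by the torsion of the symmetric algebra. The two agree exactly when $\operatorname{Proj}\operatorname{Sym}\mathcal J$ has no component over $\mathcal Z_n$ beyond the closure of its open part. I plan to get this from \Cref{thm:cheah-smooth}. Since $\PP(\mathcal J)\cong S^{[n,n+1]}$ is smooth and irreducible of dimension $2n+2$, and it is isomorphic to $S^{[n]}\times S$ off $\mathcal Z_n$, it is integral. Hence $\operatorname{Sym}\mathcal J$ modulo torsion defines all of it, so the blow-up closed immersion is an equality. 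I will phrase this via the universal property of blow-ups as a cross-check. On an integral $\PP(\mathcal J)$, the pullback of $\mathcal J$ maps onto $\mathcal O(1)$, and its image is an invertible ideal because it is locally principal and nonzero on a dense open set. So $\phi$ factors through $\Bl$, and the factoring map is a closed immersion between integral schemes of equal dimension, hence an isomorphism.

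The main obstacle is the functorial identification of families of rank-one quotients of $\mathcal J_B$ supported on a section with quotients of $\mathcal F_B$. This requires care with base change of $\mathcal F$, since $\mathcal J$ is not flat over $\mathcal Z_n$. I would first try working locally on the affine chart where $\mathcal J$ is presented by the monomial-type generators. I would use the fact that $\operatorname{Hom}(\mathcal J_B,\mathcal L)$ with $\mathcal L$ supported on the graph equals $\operatorname{Hom}(\mathcal J_B\otimes\mathcal O_{\Gamma},\mathcal L)$, which holds by right exactness of the tensor product, and this avoids any flatness hypotheses.
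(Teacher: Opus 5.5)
Your argument is correct. The paper gives no proof of \Cref{thm:blowup}; it only cites Ellingsrud--Str\o mme, Lehn and Ryan--Yang. What you wrote is essentially the argument behind those citations, in two steps:
\begin{enumerate}
\item Identify $S^{[n,n+1]}$ with $\PP(\mathcal J)=\operatorname{Proj}\operatorname{Sym}\mathcal J$ over $S^{[n]}\times S$, via a families version of \Cref{prop:phi-fiber-generators}.
\item Use \Cref{thm:cheah-smooth} to see that $\PP(\mathcal J)$ is integral. The closed immersion $\Bl_{\mathcal Z_n}(S^{[n]}\times S)\hookrightarrow\PP(\mathcal J)$ is an isomorphism away from $\mathcal Z_n$, so it contains the generic point, and is therefore an equality.
\end{enumerate}
Compared with the bare citation, your route derives the theorem from ingredients the paper already has. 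It also shows that the pointwise fibre computation of \Cref{prop:phi-fiber-generators} is the shadow of a scheme-level isomorphism $S^{[n,n+1]}\cong\PP(\mathcal J)$. Finally, it makes clear that the real content is irreducibility. For a general ideal sheaf, $\PP(\mathcal J)$ acquires extra components over the centre: already for $(x,y)^2\subset\CC[x,y]$ the fibre over the origin is a $\PP^2$. So Cheah's theorem is doing essential work. The citation buys brevity and the general-surface case without checking the functorial details.

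A few tidy-ups.
\begin{itemize}
\item The sheaf $\mathcal F$ is a detour. $q|_\Delta$ is an isomorphism and $\mathrm{pr}_{12}\circ(q|_\Delta)^{-1}=\mathrm{id}$, so $\mathcal F\cong\mathcal J$ is immediate.
\item The base-change worry is misplaced, since $\PP(\mathcal J)$ only involves pullbacks. For $g=(g_1,g_2):B\to S^{[n]}\times S$, what you need is that the ideal $\mathcal J_B$ of the pulled-back family equals $(\mathrm{id}_S\times g_1)^*\mathcal J$. This follows from flatness of $\mathcal O_{\mathcal Z_n}$ over $S^{[n]}$. Then $\Gamma^*\mathcal J_B=g^*\mathcal J$ by functoriality. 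Adjunction for the closed immersion $\Gamma$ of the graph of $g_2$ then matches surjections $\mathcal J_B\to\Gamma_*\mathcal L$ with surjections $g^*\mathcal J\to\mathcal L$.
\item You should say why $\mathcal J_B/\mathcal I_B$ has the form $\Gamma_*\mathcal L$. Its pushforward to $B$ is invertible, so $\mathcal O_S$ acts on it through a ring map to $\mathcal O_B$, and that ring map is the residual morphism.
\item In your cross-check, the map $\PP(\mathcal J)\to\Bl$ is not a priori a closed immersion. What one shows instead is that its composite with $\Bl\hookrightarrow\PP(\mathcal J)$ is the identity. This holds because on an integral $\PP(\mathcal J)$ the tautological $\mathcal O(1)$ maps isomorphically onto $\mathcal J\cdot\mathcal O_{\PP(\mathcal J)}$. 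Local principality comes from the Koszul relations $f_iT_j-f_jT_i$ in $\operatorname{Sym}\mathcal J$.
\end{itemize}
None of this affects your main line.
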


\begin{remark}
We do not reproduce the proof here. The statement is the standard
Ellingsrud--Str\o mme blow-up description of the nested Hilbert
scheme with length difference one. In the form used in this paper, it was introduced by Lehn
\cite[Section~1.2]{Lehn1999}. Ryan--Yang state the same result as
\cite[Proposition~3.5]{Ryan2020}, where the morphism
\[
  X^{[n+1,n]}\longrightarrow X^{[n]}\times X
\]
is identified with the blow-up of $X^{[n]}\times X$ along the universal family
$Z^{[n]}$. 
\end{remark}

\begin{proposition}[Fibre of the blow-up map]
\label{prop:phi-fiber-generators}
Let $(J,p)\in S^{[n]}\times S$. In the affine case, the fibre of
\[
  \phi:S^{[n,n+1]}\to S^{[n]}\times S
\]
over $(J,p)$ is naturally
\[
  \PP(J(p)^\vee),
  \qquad
  J(p)=J\otimes_R R/\mathfrak m_p\cong J/\mathfrak m_pJ,
\]
where $\PP(V)$ denotes lines in $V$. Equivalently, this is the projective space of
one dimensional quotients of $J(p)$.
\end{proposition}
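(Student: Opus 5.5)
The plan is to argue at the level of closed points first, in parallel with \Cref{prop:rho-fiber-socle}, and then upgrade to a scheme-theoretic identification using the same linear-algebra device. A point of the fibre $\phi^{-1}(J,p)$ is an ideal $I\subset J$ with $\dim_\CC J/I=1$ and $\res(I,J)=p$. As in the discussion of the residual point, the second condition means $J/I\cong R/\mathfrak m_p$ as $R$-modules. Such an $I$ is therefore the kernel of a surjective $R$-linear map $J\to R/\mathfrak m_p$, taken up to scaling, since the kernel determines the map up to a nonzero scalar.

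The key step is the adjunction
\[
\Hom_R(J,R/\mathfrak m_p)\cong \Hom_{R/\mathfrak m_p}(J\otimes_R R/\mathfrak m_p,\CC)=J(p)^\vee .
\]
It holds because any $R$-linear map $J\to R/\mathfrak m_p$ kills $\mathfrak m_pJ$, and $J/\mathfrak m_pJ\cong J\otimes_R R/\mathfrak m_p$ by right exactness of the tensor product. Under this identification, surjective maps correspond to nonzero functionals, since the target is one dimensional. Nested pairs with residual point $p$ lying over $J$ are thus in bijection with nonzero functionals on $J(p)$ up to scalar. These are points of $\PP(J(p)^\vee)$, equivalently one dimensional quotients of $J(p)$. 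Conversely, for a nonzero functional $\xi$ the kernel $I$ of $J\to J(p)\xrightarrow{\xi}\CC$ is an $R$-submodule of $J$, hence an ideal. It satisfies $J/I\cong R/\mathfrak m_p$, so $\dim R/I=n+1$ and the residual point is $p$. I will also note that $J(p)$ is finite dimensional, since $J$ is finitely generated. It is nonzero, because $J\neq \mathfrak m_pJ$ by Nakayama's lemma at $\mathfrak m_p$. If $p\notin\Supp(R/J)$, then $J_{\mathfrak m_p}=R_{\mathfrak m_p}$ and $J(p)$ is one dimensional, so the fibre is a point. This is consistent with $\phi$ being an isomorphism off $\mathcal Z_n$ in \Cref{thm:blowup}.

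For the scheme structure I will mimic the last paragraph of the proof of \Cref{prop:rho-fiber-socle}. Over a test scheme $B$, a $B$-point of the fibre is a quotient $J\otimes\mathcal O_B\to\mathcal Q$ onto a line bundle such that $\mathfrak m_p$ acts by zero on $\mathcal Q$. Here $R/I_B$ is flat of length $n+1$ and $R/J$ is constant, so $J\otimes\mathcal O_B/I_B$ is a line bundle; this point needs a short flatness check. The vanishing of the $\mathfrak m_p$-action says exactly that the quotient factors through $J(p)\otimes\mathcal O_B$. That is the functor of points of $\PP(J(p)^\vee)$, the Grassmannian of one dimensional quotients of the finite dimensional space $J(p)$. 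One subtlety is that $J$ itself is infinite dimensional over $\CC$. I will handle this by replacing $J$ with $J/\mathfrak m_p^NJ$ for $N$ large, or by noting directly that the factorization makes only $J(p)$ relevant.

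I expect the main obstacle to be this functorial upgrade, specifically verifying that the condition "residual point equals $p$" imposes precisely $\mathfrak m_p\mathcal Q=0$ scheme-theoretically over non-reduced $B$. The pointwise bijection via the adjunction is routine. I would first try to argue that the residual-point map is defined by the support of the rank-one quotient $J_B/I_B$, so that fixing it to be $p$ means $\mathfrak m_p$ annihilates $J_B/I_B$. If this proves delicate, I would fall back on identifying the fibre as a set with its reduced structure, which suffices for the torus-fixed point analysis that follows.
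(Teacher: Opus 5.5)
Your argument is correct and is essentially the paper's proof: the paper likewise identifies fibre points with surjections $J\to R/\mathfrak m_p$ up to scalar, notes that each factors uniquely through $J/\mathfrak m_pJ$, and gets the converse from the exact sequence $0\to J/I\to R/I\to R/J\to 0$. Your functor-of-points upgrade goes beyond the paper, which only proves the bijection on closed points. The worry you flag dissolves once the residual morphism is defined by the $R$-action on the rank-one quotient $\mathcal Q=J_B/I_B$: that action is a ring map $R\to\mathcal O_B$, and requiring it to be constant at $p$ is exactly $\mathfrak m_p\mathcal Q=0$.
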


\begin{proof}
A point of the fibre over $(J,p)$ is an ideal $I\subset J$ such that the residual
quotient is supported at $p$:
\[
  J/I\cong R/\mathfrak m_p.
\]
Equivalently, it is a surjective $R$-linear map
\[
  J\twoheadrightarrow R/\mathfrak m_p
\]
with kernel $I$, considered up to multiplication by a nonzero scalar. Since
$\mathfrak m_p$ acts trivially on $R/\mathfrak m_p$, every such map factors
uniquely through
\[
  J/\mathfrak m_pJ.
\]
Thus the fibre is the projective space of one dimensional quotients of
$J/\mathfrak m_pJ$.

Conversely, any one dimensional quotient of $J/\mathfrak m_pJ$ gives, after
composition with the quotient map $J\to J/\mathfrak m_pJ$, a quotient
\[
  J\twoheadrightarrow R/\mathfrak m_p
\]
up to scalar. Let $I$ be its kernel. Then $I\subset J$ and
\[
  J/I\cong R/\mathfrak m_p.
\]
The exact sequence
\[
  0\to J/I\to R/I\to R/J\to 0
\]
shows that
\[
  \dim_\CC R/I=\dim_\CC R/J+\dim_\CC J/I=n+1.
\]
Hence $I\subset J$ defines a point of $S^{[n,n+1]}$ lying over $(J,p)$.
This construction is inverse to the previous one, and gives the stated projective
space description.
\end{proof}

If $p\notin \Supp(R/J)$, then $J_p=R_p$, hence
\[
  J(p)\cong R/\mathfrak m_p,
\]
so the fibre is a point. If $p\in \Supp(R/J)$ and the ideal $J$ requires $i$ local
generators at $p$, then
\[
  \dim_\CC J/\mathfrak m_pJ=i
\]
by Nakayama's lemma, and the fibre is a projective space of dimension $i-1$.
This is the local geometry the blow-up shows.

\begin{proposition}[Addable boxes as local generator directions]
\label{prop:addable-generator-directions}
Let $J=I_\mu$ be a monomial ideal of finite colength, so that the corresponding
subscheme is supported at the origin. Then
\[
  I_\mu/(x,y)I_\mu
\]
has a basis given by the images of the minimal monomial generators of $I_\mu$.
Equivalently, these basis vectors are indexed by the addable boxes of $D(\mu)$.
Consequently, the torus-fixed points in the fibre $\phi^{-1}(I_\mu,0)$ correspond
to addable boxes of $\mu$.
\end{proposition}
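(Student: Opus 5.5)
Let $m=(x,y)$. The plan has three steps: show that the minimal monomial generators give a basis of $I_\mu/mI_\mu$, match those generators with the addable boxes of $D(\mu)$, and then identify the torus-fixed points of $\phi^{-1}(I_\mu,0)$ with the resulting weight lines via \Cref{prop:phi-fiber-generators}.

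\textbf{Step 1: a basis of $I_\mu/mI_\mu$.} Since $I_\mu$ is spanned by the monomials with exponents in $E(I_\mu)$, the subspace $mI_\mu$ is spanned by the monomials $x^iy^j$ with $(i-1,j)\in E(I_\mu)$ or $(i,j-1)\in E(I_\mu)$. Hence $I_\mu/mI_\mu$ has a basis given by the images of the monomials $x^iy^j\in I_\mu$ satisfying
\[
  x^{i-1}y^j\notin I_\mu \text{ (or } i=0),\qquad x^iy^{j-1}\notin I_\mu \text{ (or } j=0).
\]
These are exactly the minimal monomial generators of $I_\mu$: a monomial in $I_\mu$ is a non-minimal generator precisely when dividing it by $x$ or by $y$ stays inside $I_\mu$. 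Equivalently, one can use Nakayama's lemma together with the $\NN^2$-grading, since all the modules involved are monomial.

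\textbf{Step 2: generators versus addable boxes.} A box $a=(i,j)\notin D(\mu)$ is addable exactly when $D(\mu)\cup\{a\}$ is downward closed. This means $(i-1,j)\in D(\mu)$ or $i=0$, and $(i,j-1)\in D(\mu)$ or $j=0$. Since $D(\mu)=B(I_\mu)$, this is literally the condition from Step 1. So the assignment $x^iy^j\mapsto(i,j)$ is a bijection from minimal generators to addable boxes. As a check, this agrees with the boundary list $x^{\mu_0},x^{\mu_1}y,\dots,y^r$ once redundant entries are removed.

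\textbf{Step 3: torus-fixed points of the fibre.} By \Cref{prop:phi-fiber-generators}, the fibre $\phi^{-1}(I_\mu,0)$ is the projective space of one dimensional quotients of $V=I_\mu/mI_\mu$, that is, $\PP(V^\vee)$. The $T$-action on this fibre is induced from the action on $V$. Because $I_\mu$ and $m$ are $T$-stable, $V$ is a $T$-representation, and by Step 1 it has a basis of weight vectors $\bar g_a$, one for each addable box $a$. These weights $q^{-i}t^{-j}$ are pairwise distinct, since distinct monomials have distinct characters. A $T$-fixed point of $\PP(V^\vee)$ is a $T$-stable hyperplane $\ker$ of a quotient, equivalently a $T$-eigenline in $V^\vee$. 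When all weights are distinct, these are exactly the coordinate lines dual to the basis. So the fixed points are the quotients $V\to\CC\bar g_a$ that kill all the other generators.

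In ideal terms, the fixed point attached to $a$ is
\[
  I=mI_\mu+(g_b: b\neq a),
\]
which is the monomial ideal $I_{\mu\cup a}$. This matches \Cref{prop:corners-socle}, since $a$ is a removable corner of $\mu\cup a$.

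\textbf{Main obstacle.} The only delicate point is Step 3: we need the torus to act with pairwise distinct weights on $V$, so that the fixed locus is finite and consists exactly of the coordinate points. I would verify this directly from the distinctness of monomial characters, as in the proof of \Cref{prop:corners-socle}. The final identification $I=I_{\mu\cup a}$ then needs a short check: the monomials outside $I$ are precisely $D(\mu)\cup\{a\}$.
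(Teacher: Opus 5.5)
Your proposal is correct and follows essentially the same route as the paper's proof. You take as basis of $I_\mu/(x,y)I_\mu$ the monomials of $I_\mu$ not lying in $(x,y)I_\mu$, which are the minimal generators, match these with addable boxes via downward closedness, and then use \Cref{prop:phi-fiber-generators} with the distinctness of monomial characters to show that the fixed one dimensional quotients are the coordinate ones; your check that the kernel $(x,y)I_\mu+(g_b:b\neq a)$ equals $I_{\mu\cup\{a\}}$ just spells out a step the paper asserts without detail.
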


\begin{proof}
We use Nakayama's lemma in the following form. If $(A,\mathfrak m,k)$
is a local ring and $M$ is a finitely generated $A$-module, then a set of elements
of $M$ generates $M$ if and only if their images span the $k$-vector space
$M/\mathfrak mM$. In particular, the minimal number of generators of $M$ is
\[
  \dim_k M/\mathfrak mM.
\]

Apply this to the local ring
\[
  A=R_{(x,y)}
\]
at the origin, with maximal ideal $\mathfrak m=(x,y)A$, and to the local
$A$-module $J_A=J\otimes_R A$. Since
\[
  J_A/\mathfrak m J_A
  \cong
  J/(x,y)J,
\]
the vector space $J/(x,y)J$ records the minimal generators of $J$ at the origin.

Now assume $J=I_\mu$ is monomial. The quotient $J/(x,y)J$ has a basis given by
the residue classes of the minimal monomial generators of $J$. Indeed, a monomial
of $J$ becomes zero in $J/(x,y)J$ precisely when it is divisible by $x$ or $y$
times another monomial already lying in $J$, and these are exactly the non-minimal
monomial generators. Thus the surviving monomial classes are the
minimal monomial generators of $I_\mu$.

A monomial outside $D(\mu)$ is a minimal generator of $I_\mu$ when
adding the corresponding box to $D(\mu)$ gives another Young diagram. Hence
the minimal monomial generators of $I_\mu$ are indexed by the addable
boxes of $D(\mu)$.

By \Cref{prop:phi-fiber-generators}, the fibre of $\phi$ over $(I_\mu,0)$ is the
projective space of one dimensional quotients of
\[
  I_\mu/(x,y)I_\mu.
\]
The torus acts diagonally on the monomial generator classes, and distinct
monomials have distinct $T$-characters. Therefore the torus-fixed one dimensional
quotients are precisely the coordinate quotients dual to the monomial basis
vectors. These coordinate quotients are indexed by the addable boxes of $D(\mu)$.

Choosing the quotient corresponding to an addable box $a$ gives the kernel ideal
\[
  I_{\mu\cup\{a\}}\subset I_\mu.
\]
Therefore the associated nested fixed point is
\[
  I_{\mu\cup\{a\}}\subset I_\mu,
\]
as claimed.
\end{proof}

\subsection{Adding a free point and adding an infinitesimal point}
\label{sec:add-point-vs-infinitesimal}

The two fibre descriptions show how the universal family and the blow-up have the same deformation geometry. A point of $S^{[n]}\times S$ is a pair $(Y,p)$ consisting of a subscheme of length $n$ and a point of the surface.  If
\[
  p\notin \Supp(Y),
\]
then there is only one way to form a subscheme of length $(n+1)$ whose residual point is $p$, which is to take the disjoint union
\[
  Z=Y\sqcup \{p\}.
\]
Algebraically, this is the transverse intersection of the ideal of $Y$ with the maximal ideal $\mathfrak m_p$.

However, when $p\in \Supp(Y)$, adding one more unit of length is no longer the same as adding an independent reduced point.  By \Cref{prop:phi-fiber-generators}, one must choose a one dimensional quotient of
\[
  J(p)=J/\mathfrak m_pJ,
\]
where $J$ is the ideal of $Y$. This quotient specifies which local generator direction of $J$ is removed when
passing to the smaller ideal $I\subset J$. Since smaller ideals define larger
subschemes, this choice determines how one extra unit of length is added at $p$.
Thus the exceptional fibre of the blow-up is not an accidental extra component, instead
it records the infinitesimal choices that appear when the new point collides with
the old subscheme.

Dually, if one starts with the larger subscheme $Z$ defined by $I$ and asks which subscheme of length $n$ has been removed, \Cref{prop:rho-fiber-socle} says that the choice is a line in the local socle of $R/I$.  In monomial coordinates the two descriptions become the two sides of the same Young diagram operation:
\[
\begin{array}{c|c|c}
\toprule
\text{map} & \text{local fibre} & \text{monomial fixed data} \\
\midrule
S^{[n,n+1]}\to S^{[n]}\times S
& \text{quotients of } I_\mu/(x,y)I_\mu
& \text{addable boxes of }\mu \\
S^{[n,n+1]}\to \mathcal Z_{n+1}
& \text{lines in }\Soc_0(R/I_\lambda)
& \text{removable corners of }\lambda \\
\bottomrule
\end{array}
\]
This table is one of the main organizing points of the paper: for a nested Hilbert scheme of difference one, viewed from the larger subscheme, it is a socle direction, represented by a removable corner, while viewed from the smaller subscheme, it is a quotient of the local generator space, represented by an addable box.

\subsection{Fixed points of the nested Hilbert scheme}

\begin{corollary}
\label{cor:fixed-points-nested}
The $T$-fixed points of $S^{[n,n+1]}$ are indexed by pairs
\[
  (\lambda,c),
  \qquad
  \lambda\vdash n+1,
  \qquad
  c\in C(\lambda).
\]
The corresponding nested ideal pair is
\[
  I_\lambda\subset I_{\lambda\setminus c}.
\]
\end{corollary}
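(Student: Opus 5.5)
The plan is to reduce the fixed-point question on $S^{[n,n+1]}$ to the fixed-point classification on the two factors, and then use the socle description of the extra datum. Since $S^{[n,n+1]}\subset S^{[n+1]}\times S^{[n]}$ is a $T$-stable closed subscheme and $T$ acts diagonally on the product, a closed point $(I,J)$ is $T$-fixed if and only if both $I$ and $J$ are $T$-fixed. By \Cref{prop:fixed-points-hilb}, this means $I=I_\lambda$ and $J=I_\mu$ are monomial ideals for some $\lambda\vdash n+1$ and $\mu\vdash n$. Conversely, any nested pair of monomial ideals is visibly fixed. So it remains to classify nested pairs $I_\lambda\subset I_\mu$ of monomial ideals with colengths $n+1$ and $n$.

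For such a pair, the inclusion $I_\lambda\subset I_\mu$ is equivalent to $D(\mu)\subset D(\lambda)$ on the level of staircases (\Cref{prop:partitions-monomial-ideals}). Since the sizes differ by one, $D(\lambda)\setminus D(\mu)$ is a single box $c=(i,j)$. Because $D(\mu)=D(\lambda)\setminus\{c\}$ must be downward closed, $c$ is a removable corner, so $c\in C(\lambda)$ and $\mu=\lambda\setminus c$. Conversely, every $c\in C(\lambda)$ produces the fixed pair $I_\lambda\subset I_{\lambda\setminus c}$, and distinct pairs $(\lambda,c)$ give distinct ideal pairs. This gives the bijection.

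I would also record the consistency check with \Cref{prop:rho-fiber-socle} and \Cref{prop:corners-socle}. The residual point of a fixed pair is $T$-fixed, hence the origin. The fibre of $\rho$ over $(I_\lambda,0)$ is $\PP(\Soc_0(R/I_\lambda))$, and a $T$-fixed point in it is a $T$-stable line. These lines are exactly the corner monomial lines, and the line spanned by $x^iy^j$ pulls back to $I_\lambda+(x^iy^j)=I_{\lambda\setminus c}$.

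The only step requiring care is the first: fixedness of the pair being equivalent to fixedness of each component. This holds because the projections to $S^{[n+1]}$ and $S^{[n]}$ are $T$-equivariant and jointly injective on points. The rest is combinatorial.
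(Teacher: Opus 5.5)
Your proposal is correct and follows essentially the same route as the paper. Both arguments apply \Cref{prop:fixed-points-hilb} to each factor to get a nested pair of monomial ideals $I_\lambda\subset I_\mu$, observe that the diagrams differ by one box, which must be a removable corner, and note agreement with the socle description of \Cref{prop:corners-socle}; you are slightly more explicit than the paper about why downward closedness forces $c\in C(\lambda)$ and why distinct pairs $(\lambda,c)$ give distinct fixed points.
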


\begin{proof}
A $T$-fixed point of the product $S^{[n+1]}\times S^{[n]}$ consists of two monomial ideals by \Cref{prop:fixed-points-hilb} applied to each factor.  Thus a fixed point of the nested incidence scheme has the form
\[
  I_\lambda\subset I_\mu,
\]
with $|\lambda|=n+1$ and $|\mu|=n$.  The quotient $I_\mu/I_\lambda$ is one dimensional, so the diagrams differ by exactly one box:
\[
  D(\mu)=D(\lambda)\setminus\{c\}.
\]
For $D(\mu)$ to be a Young diagram, the removed box must be a removable corner of $D(\lambda)$.  Conversely, every removable corner gives such an inclusion.  This agrees with the socle line of \Cref{prop:corners-socle}, and at the same time it agrees with the addable box of \Cref{prop:addable-generator-directions} applied to $\mu=\lambda\setminus c$.
\end{proof}

\section{Tangent spaces and torus weights for the nested Hilbert scheme}
\label{chap:nested-tangent}

The nested Hilbert scheme is an incidence scheme inside a product of two Hilbert schemes as in \Cref{nhtdef}.  Its tangent space is therefore obtained by imposing a compatibility condition on first-order deformations of the two ideals.  This kernel description is well studied for nested Hilbert schemes, for example see Cheah's tangent space discussion \cite[Section~0.4]{Cheah1998}, Can's kernel formula \cite[Section~5]{Can2007}, and the formulation of Chaput--Evain \cite[Lemma~12]{ChaputEvain2015}.  At torus-fixed points, this compatibility condition is the deformation theory reason behind the shortening rule for Young diagram arrows in \Cref{sec:shortening-rule}.

\subsection{The compatibility kernel}
\label{sec:nested-kernel}

Let
\[
  I\subset J\subset R,
  \qquad
  \dim_\CC R/I=n+1,
  \qquad
  \dim_\CC R/J=n.
\]
Let
\[
  \iota:I\hookrightarrow J
\]
be the inclusion and
\[
  \pi:R/I\longrightarrow R/J
\]
be the natural quotient map.

\begin{theorem}[Tangent space to the nested Hilbert scheme]
\label{thm:nested-kernel}
There is an isomorphism
\[
T_{(I,J)}S^{[n,n+1]}
\cong
\ker\left(
\Hom_R(I,R/I)\oplus \Hom_R(J,R/J)
\xrightarrow{\delta}
\Hom_R(I,R/J)
\right),
\]
where
\[
  \delta(\alpha,\beta)=\pi\circ \alpha-\beta\circ\iota.
\]
\end{theorem}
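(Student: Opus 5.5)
The plan is to compute the tangent space through the functor of points, exactly as in the proof of \Cref{prop:tangent-hilb}. By \Cref{def:tangent-space}, a tangent vector at $(I,J)$ is a morphism $D\to S^{[n,n+1]}$ restricting to $(I,J)$. Since $S^{[n,n+1]}$ is the incidence subscheme of $S^{[n+1]}\times S^{[n]}$ (\Cref{nhtdef}), such a morphism is a pair of flat first-order deformations $I_\eps, J_\eps\subset R_A=R\otimes_\CC A_\eps$ of $I$ and $J$ satisfying the single extra condition $I_\eps\subset J_\eps$. Here I use that the incidence scheme represents the functor of flat nested families; this is the definition of the nested Hilbert functor.

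By \Cref{prop:tangent-hilb}, the pair $(I_\eps,J_\eps)$ corresponds to a pair
\[
(\alpha,\beta)\in\Hom_R(I,R/I)\oplus\Hom_R(J,R/J),
\]
with $I_\eps=I_\alpha$ and $J_\eps=I_\beta$ in the notation of that proof. The theorem therefore reduces to one claim: $I_\alpha\subset I_\beta$ if and only if $\pi\circ\alpha=\beta\circ\iota$.

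First suppose $\pi\circ\alpha=\beta\circ\iota$. Take $f+\eps g\in I_\alpha$, so $f\in I$ and $\bar g=\alpha(f)$ in $R/I$. Then $f\in J$, and the class of $g$ in $R/J$ is $\pi(\alpha(f))=\beta(\iota f)$. Hence $f+\eps g\in I_\beta$.

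Conversely, suppose $I_\alpha\subset I_\beta$. For $f\in I$, choose $g$ with $\bar g=\alpha(f)$. Then $f+\eps g\in I_\beta$, so the class of $g$ modulo $J$ equals $\beta(f)$. This class is $\pi(\alpha(f))$, which gives $\delta(\alpha,\beta)=0$.

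This yields a bijection between tangent vectors and $\ker\delta$. The set map is $\CC$-linear because the identification in \Cref{prop:tangent-hilb} is linear on each factor, and the tangent space of a closed subscheme of a product is a linear subspace of the product's tangent space. So the bijection is an isomorphism of vector spaces.

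The main obstacle is justifying that first-order points of $S^{[n,n+1]}$ are exactly pairs of flat deformations with $I_\eps\subset J_\eps$. In particular, no extra flatness condition on $J_\eps/I_\eps$ should be needed. I would check this directly. Both quotients $R_A/I_\eps$ and $R_A/J_\eps$ are free over $A_\eps$ of ranks $n+1$ and $n$. The surjection between them has kernel $J_\eps/I_\eps$, and a surjection of free modules over the local ring $A_\eps$ has free kernel of rank $1$, so this kernel is automatically flat. Hence the incidence condition is purely the containment. Well-definedness of $\delta$ is also routine: $\pi\circ\alpha$ and $\beta\circ\iota$ are compositions of $R$-linear maps, so they lie in $\Hom_R(I,R/J)$.
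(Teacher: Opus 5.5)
Your proof is correct and follows the same route as the paper: you identify tangent vectors of the incidence scheme with pairs $(\alpha,\beta)$ via \Cref{prop:tangent-hilb}, then show that the deformed ideals satisfy $I_\alpha\subset J_\beta$ exactly when $\pi\circ\alpha=\beta\circ\iota$. You spell out more than the paper's terse argument does, namely both directions of the containment check, the linearity of the identification, and the automatic freeness of $J_\eps/I_\eps$ over $A_\eps$.
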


\begin{proof}
A tangent vector to the product $S^{[n+1]}\times S^{[n]}$ at $(I,J)$ is a pair
\[
  (\alpha,\beta)
  \in
  \Hom_R(I,R/I)\oplus \Hom_R(J,R/J),
\]
by \Cref{prop:tangent-hilb}.  We must determine when the corresponding first-order deformations preserve the inclusion $I\subset J$.

Let $A_\eps=\CC[\eps]/(\eps^2)$ and $R_\eps=R\otimes_\CC A_\eps$.  The map $\alpha$ corresponds to a deformation $I_\alpha\subset R_\eps$, and $\beta$ corresponds to a deformation $J_\beta\subset R_\eps$.  The pair defines a tangent vector to the incidence scheme precisely when
\[
  I_\alpha\subset J_\beta.
\]
For $f\in I$, choose representatives so that
\[
  f+\eps g\in I_\alpha,
  \qquad
  \bar g=\alpha(f)\in R/I.
\]
The same element belongs to $J_\beta$ exactly when its image in $R/J$ agrees with the image prescribed by the deformation of $J$.  This condition is
\[
  \pi(\alpha(f))=\beta(f)
  \qquad\text{in } R/J,
\]
where $f$ is regarded as an element of $J$ via the inclusion $\iota:I\hookrightarrow J$.  Thus the nested condition is exactly
\[
  \pi\circ\alpha=\beta\circ\iota.
\]
This is equivalent to $\delta(\alpha,\beta)=0$.
\end{proof}

\begin{remark}
The map $\delta$ is $T$-equivariant whenever $(I,J)$ is $T$-fixed.  Hence the kernel decomposes into $T$-weight spaces.  This is what makes the Young diagram weight calculation possible and valuable.
\end{remark}

\subsection{Fixed-point notation}

By \Cref{cor:fixed-points-nested}, the $T$-fixed points of $S^{[n,n+1]}$ are indexed by a partition $\lambda\vdash n+1$ together with a removable corner $c\in C(\lambda)$.  Fix such a pair and put
\[
  \mu=\lambda\setminus c.
\]
The corresponding fixed point of $S^{[n,n+1]}$ is
\[
  (I_\lambda,I_\mu),
  \qquad
  I_\lambda\subset I_\mu.
\]
Write
\[
  c=(i_0,j_0).
\]
For a box $s=(i,j)\in D(\lambda)$, we say:
\begin{itemize}
  \item $s$ is \emph{left of $c$} if $j=j_0$ and $i<i_0$.
  \item $s$ is \emph{below $c$} if $i=i_0$ and $j<j_0$.
\end{itemize}
We will see that these are the only boxes whose ordinary tangent weights are modified by the nested condition.

\subsection{A monomial criterion for arrows}
\label{sec:monomial-criterion-arrows}

The arrow construction can be made algebraic using only generators and syzygies of monomial ideals.  We introduce the following criterion because it is the mechanism behind the compatibility kernel and the Macaulay2 verification in \Cref{sec:computational-verification}.

This is a lcm-syzygy description of homomorphisms from a monomial ideal. In two variables, the minimal generators form a boundary chain, and the adjacent boundary syzygies generate the first syzygy module, see Miller--Sturmfels \cite[Chapter~3]{MillerSturmfels2005}.

\begin{lemma}[Monomial syzygy criterion]
\label{lem:monomial-syzygy-criterion}
Let $I=(m_1,\ldots,m_r)$ be a monomial ideal, where $m_1,\ldots,m_r$ are the minimal monomial generators ordered along the boundary, and let $K\subset R$ be another monomial ideal.  A collection of classes
\[
  u_i\in R/K,
  \qquad i=1,\ldots,r,
\]
defines an $R$-linear homomorphism $\varphi:I\to R/K$ with $\varphi(m_i)=u_i$ if and only if for every pair of these minimal generators one has
\[
  \frac{\operatorname{lcm}(m_i,m_j)}{m_i}u_i
  =
  \frac{\operatorname{lcm}(m_i,m_j)}{m_j}u_j
  \qquad\text{in }R/K.
\]
It is enough to impose these equations for adjacent minimal generators along the monomial boundary.
\end{lemma}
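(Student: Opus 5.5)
The plan is to present $I$ by its minimal generators and relations, and to read off $\Hom_R(I,R/K)$ from that presentation. Let $F=\bigoplus_{i=1}^r R e_i$ and let $\epsilon:F\to I$ send $e_i\mapsto m_i$. Since $I$ is generated by the $m_i$, the map $\epsilon$ is surjective. Hence an $R$-linear map $\varphi:I\to R/K$ is the same as an $R$-linear map $\tilde\varphi:F\to R/K$ that vanishes on $\ker\epsilon$. Such a $\tilde\varphi$ is determined by the free choice $\tilde\varphi(e_i)=u_i$. This reduces everything to a generating set of the syzygy module $\ker\epsilon$: the condition is that $\tilde\varphi$ kills each generator of $\ker\epsilon$.

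The first step is the classical fact that $\ker\epsilon$ is generated by the binomial syzygies
\[
\sigma_{ij}=\frac{\operatorname{lcm}(m_i,m_j)}{m_i}e_i-\frac{\operatorname{lcm}(m_i,m_j)}{m_j}e_j .
\]
I would prove this using the $\NN^2$-grading, giving $e_i$ degree $\deg m_i$, so that $\epsilon$ is homogeneous. Then $\ker\epsilon$ is spanned by homogeneous elements of some degree $x^ay^b$. In that degree the kernel consists of vectors $\sum c_i (x^ay^b/m_i)e_i$, taken over the indices $i$ with $m_i\mid x^ay^b$, whose coefficients satisfy $\sum c_i=0$. Such a vector is a $\CC$-combination of differences $(x^ay^b/m_i)e_i-(x^ay^b/m_j)e_j$. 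Each of these differences equals $(x^ay^b/\operatorname{lcm}(m_i,m_j))\,\sigma_{ij}$. Applying $\tilde\varphi$ to $\sigma_{ij}$ gives exactly the displayed equation, and this proves the first statement. The forward direction is trivial: if $\varphi$ exists, both sides of the equation equal $\varphi(\operatorname{lcm}(m_i,m_j))$.

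For the adjacency reduction I would order the generators as $m_k=x^{a_k}y^{b_k}$ with $a_1>a_2>\cdots>a_r$ and $b_1<b_2<\cdots<b_r$. This ordering is possible in two variables because the generators are minimal. For $i<k<j$, the lcm of $m_i$ and $m_j$ is $x^{a_i}y^{b_j}$. This monomial is divisible by $\operatorname{lcm}(m_i,m_k)=x^{a_i}y^{b_k}$ and by $\operatorname{lcm}(m_k,m_j)=x^{a_k}y^{b_j}$. A direct computation then gives
\[
\sigma_{ij}=y^{b_j-b_k}\sigma_{ik}+x^{a_i-a_k}\sigma_{kj}.
\]
Inducting on $j-i$ shows that every $\sigma_{ij}$ lies in the submodule generated by the adjacent syzygies $\sigma_{k,k+1}$. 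Consequently the adjacent equations imply all the others.

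I expect the main obstacle to be the homogeneous-degree argument showing that the pairwise syzygies generate $\ker\epsilon$. Strictly speaking, this step only needs the $\NN^2$-graded bookkeeping carefully written out. Alternatively, it can be cited from Miller--Sturmfels. The telescoping identity is then a routine exponent check.
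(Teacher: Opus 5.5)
Your proposal is correct and follows the same route as the paper: you present $I$ by the free module on its minimal generators, use that the lcm syzygies generate the syzygy module, and reduce to the adjacent syzygies along the boundary chain. The paper only asserts these two facts, citing Miller--Sturmfels, whereas you actually prove them, via the $\NN^2$-graded degree argument and your identity $\sigma_{ij}=y^{b_j-b_k}\sigma_{ik}+x^{a_i-a_k}\sigma_{kj}$, which is correct.
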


\begin{proof}
The displayed relations are exactly the images of the standard monomial syzygies
\[
  \frac{\operatorname{lcm}(m_i,m_j)}{m_i}m_i
  -
  \frac{\operatorname{lcm}(m_i,m_j)}{m_j}m_j=0.
\]
Thus they are necessary.  Conversely, the first syzygies of a monomial ideal are generated by these lcm syzygies.  For a monomial ideal of finite colength in two variables, the minimal generators form one boundary chain, and the adjacent lcm syzygies along this chain generate all the remaining lcm syzygies.  Therefore a choice of images satisfying the adjacent relations descends from the free module on the minimal generators to the ideal $I$ itself, giving the desired homomorphism.
\end{proof}

\subsection{The row-and-column shortening rule}
\label{sec:shortening-rule}

Before stating the formula, we isolate the local extension problem created by the corner.  The lemma is included to make the compatibility kernel explanation algebraic rather than only combinatorial.

\begin{lemma}[Extension across the corner]
\label{lem:extension-across-corner}
Let $\lambda\vdash n+1$, let $c=(i_0,j_0)\in C(\lambda)$, put $\mu=\lambda\setminus c$, and write
\[
  m_c=x^{i_0}y^{j_0}.
\]
Then
\[
  I_\mu=I_\lambda+(m_c),
  \qquad
  R/I_\mu=(R/I_\lambda)/\CC m_c.
\]
Let $\rho_c:R/I_\lambda\to R/I_\mu$ be the quotient map.  For a homomorphism
\[
  \alpha:I_\lambda\longrightarrow R/I_\lambda,
\]
there exists a homomorphism
\[
  \beta:I_\mu\longrightarrow R/I_\mu
\]
satisfying
\[
  \beta|_{I_\lambda}=\rho_c\circ\alpha
\]
if and only if there is an element $u\in R/I_\mu$ such that
\[
  xu=\rho_c\bigl(\alpha(xm_c)\bigr),
  \qquad
  yu=\rho_c\bigl(\alpha(ym_c)\bigr).
\]
When such a $u$ is chosen, the extension is given by $\beta(m_c)=u$ together with the prescribed restriction to $I_\lambda$.
\end{lemma}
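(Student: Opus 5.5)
First we establish the two identities. Since $c$ is a removable corner, $D(\mu)=D(\lambda)\setminus\{c\}$ is a Young diagram, and by \Cref{prop:partitions-monomial-ideals} the monomials outside $D(\mu)$ are those outside $D(\lambda)$ together with $m_c$. Hence $I_\mu=I_\lambda+(m_c)$. Because $c$ is a corner, $xm_c$ and $ym_c$ lie in $I_\lambda$, so $\mathfrak m_0 m_c\subset I_\lambda$. This is also \Cref{prop:corners-socle}: $m_c$ spans a socle line. Therefore $I_\mu/I_\lambda=\CC m_c$, and $R/I_\mu=(R/I_\lambda)/\CC m_c$, with $\rho_c$ the natural quotient map.

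\emph{Necessity.} Suppose $\beta$ exists with $\beta|_{I_\lambda}=\rho_c\circ\alpha$. Put $u=\beta(m_c)\in R/I_\mu$. Since $xm_c\in I_\lambda$, $R$-linearity of $\beta$ gives
\[
  xu=\beta(xm_c)=\rho_c(\alpha(xm_c)).
\]
The same argument with $y$ gives $yu=\rho_c(\alpha(ym_c))$.

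\emph{Sufficiency.} Given $u$ satisfying the two equations, we must construct $\beta$. The cleanest route avoids \Cref{lem:monomial-syzygy-criterion} and uses the pushout description of $I_\mu$. Consider the surjection
\[
  I_\lambda\oplus R\longrightarrow I_\mu,\qquad (f,r)\longmapsto f+rm_c.
\]
Its kernel is $\{(-rm_c,r): rm_c\in I_\lambda\}$. Since $m_c\notin I_\lambda$ and $\mathfrak m_0 m_c\subset I_\lambda$, the colon ideal $(I_\lambda:m_c)$ equals $(x,y)$. So the kernel is generated as an $R$-module by $(-xm_c,x)$ and $(-ym_c,y)$; as a module it is $\{(-rm_c,r): r\in(x,y)\}$, and it is generated by these two elements.

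Define $\tilde\beta:I_\lambda\oplus R\to R/I_\mu$ by $\tilde\beta(f,r)=\rho_c(\alpha(f))+ru$. It descends to $I_\mu$ exactly when it vanishes on the two kernel generators, that is, when
\[
  -\rho_c(\alpha(xm_c))+xu=0,\qquad -\rho_c(\alpha(ym_c))+yu=0.
\]
These are precisely the hypotheses. The induced $\beta$ restricts to $\rho_c\circ\alpha$ on $I_\lambda$ and satisfies $\beta(m_c)=u$. It is unique given $u$, because $I_\lambda$ and $m_c$ generate $I_\mu$. This yields the final sentence of the lemma.

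The main point needing care is identifying the kernel, which amounts to computing $(I_\lambda:m_c)=(x,y)$. This relies on $m_c\notin I_\lambda$ together with the corner condition. Alternatively, one can argue through \Cref{lem:monomial-syzygy-criterion}: the minimal generators of $I_\mu$ are those of $I_\lambda$ not divisible by $m_c$, plus $m_c$ itself. The adjacent syzygies involving $m_c$ are the two equations above, possibly multiplied by a monomial when $xm_c$ or $ym_c$ is not itself a minimal generator of $I_\lambda$. The remaining syzygies are inherited from $\alpha$. The pushout argument avoids this bookkeeping.
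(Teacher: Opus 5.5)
Your proof is correct and is essentially the paper's argument: the paper likewise notes that $I_\mu$ is generated by $I_\lambda$ and $m_c$, with the new relations controlled by the kernel $(x,y)=(I_\lambda:m_c)$ of $R\to I_\mu/I_\lambda$. Your explicit presentation $I_\lambda\oplus R\twoheadrightarrow I_\mu$ simply makes precise the paper's step about ``relations involving the new generator.'' One small correction, which affects only your closing aside and not your main proof: the extra monomial factor in the adjacent syzygy at $m_c$ appears when $xm_c$ (or $ym_c$) \emph{is} a minimal generator of $I_\lambda$, and in that case the exact equation comes from the restriction condition $\beta(xm_c)=\rho_c\alpha(xm_c)$ rather than from a syzygy.
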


\begin{proof}
Since $c$ is a removable corner, both $xm_c$ and $ym_c$ lie in $I_\lambda$.  The quotient
\[
  I_\mu/I_\lambda
\]
is one dimensional, generated by the class of $m_c$, and is killed by the maximal ideal $(x,y)$.  Equivalently, the map $R\to I_\mu/I_\lambda$ sending $1$ to the class of $m_c$ identifies $I_\mu/I_\lambda$ with $R/(x,y)$, so the kernel is generated by $x$ and $y$.  Thus $I_\mu$ is generated by $I_\lambda$ and $m_c$, and all relations involving the new generator are generated, modulo relations already in $I_\lambda$, by
\[
  xm_c\in I_\lambda,
  \qquad
  ym_c\in I_\lambda.
\]
If $\beta$ is an extension of $\rho_c\circ\alpha$ and $u=\beta(m_c)$, then $R$-linearity gives
\[
  xu=\beta(xm_c)=\rho_c(\alpha(xm_c)),
  \qquad
  yu=\beta(ym_c)=\rho_c(\alpha(ym_c)).
\]
Conversely, suppose that such a $u$ exists.  Define $\beta$ on the generators $I_\lambda$ and $m_c$ by
\[
  \beta(f)=\rho_c(\alpha(f))\quad(f\in I_\lambda),
  \qquad
  \beta(m_c)=u.
\]
The two displayed equations say precisely that the relations $xm_c\in I_\lambda$ and $ym_c\in I_\lambda$ are respected.  The remaining relations are relations inside $I_\lambda$, and these are respected because $\alpha$ is $R$-linear.  Hence the assignment descends to an $R$-linear homomorphism $I_\mu\to R/I_\mu$.
\end{proof}

We will also use the standard arrow basis for the ordinary Hilbert scheme in the following boundary form.  For every box $s=(i,j)\in D(\lambda)$ there are two homogeneous arrows, denoted
\[
  H_s^\lambda,
  \qquad
  V_s^\lambda,
\]
forming a basis of $\Hom_R(I_\lambda,R/I_\lambda)$.  Their weights are
\[
  \operatorname{wt}(H_s^\lambda)=q^{a(s)+1}t^{-\ell(s)},
  \qquad
  \operatorname{wt}(V_s^\lambda)=q^{-a(s)}t^{\ell(s)+1}.
\]
Algebraically, these arrows are the homogeneous solutions of the adjacent lcm-syzygy equations of \Cref{lem:monomial-syzygy-criterion}.  The horizontal arrow $H_s^\lambda$ is the solution whose critical boundary relation is the horizontal one at the end of the row through $s$, while the vertical arrow $V_s^\lambda$ is the solution whose critical boundary relation is the vertical one at the top of the column through $s$.  This is the usual arm-leg arrow basis for the tangent space of $S^{[|\lambda|]}$ \cite[Proposition~5.8]{Nakajima1999}, and see also Chaput--Evain \cite[Section~2.1]{ChaputEvain2015}.

The next theorem is the standard fixed-point tangent weight formula for the smooth nested Hilbert scheme of difference one.  We state it in the torus convention of \Cref{sec:torus-convention}.  The formula is the arm-leg character for $S^{[n+1]}$, modified near the distinguished corner.  It is equivalent to Cheah's explicit tangent basis computation for smooth nested Hilbert schemes \cite[Section~2.6]{Cheah1998}, to the staircase and cleft pair computation of the tangent representation by Chaput--Evain \cite[Section~2.2]{ChaputEvain2015}, and to the row-and-column shortening rule recorded by Koncki--Zielenkiewicz \cite[Section~5.3]{KonckiZielenkiewicz2025}.

\begin{theorem}[Cheah, Chaput--Evain, Koncki--Zielenkiewicz, in our convention]
\label{thm:nested-weights}
Let $(I_\lambda,I_{\lambda\setminus c})$ be a $T$-fixed point of $S^{[n,n+1]}$, where $\lambda\vdash n+1$ and $c=(i_0,j_0)$ is a removable corner.  For each box $s\in D(\lambda)$, set $a(s)=a_\lambda(s)$ and $\ell(s)=\ell_\lambda(s)$.  Define two weights
\[
  w_1(s)=
  \begin{cases}
    q^{a(s)}t^{-\ell(s)}, & \text{if }s\text{ is left of }c,\\
    q^{a(s)+1}t^{-\ell(s)}, & \text{otherwise,}
  \end{cases}
\]
and
\[
  w_2(s)=
  \begin{cases}
    q^{-a(s)}t^{\ell(s)}, & \text{if }s\text{ is below }c,\\
    q^{-a(s)}t^{\ell(s)+1}, & \text{otherwise.}
  \end{cases}
\]
Then
\[
  \operatorname{ch}_T T_{(I_\lambda,I_{\lambda\setminus c})}S^{[n,n+1]}
  =
  \sum_{s\in D(\lambda)}\bigl(w_1(s)+w_2(s)\bigr).
\]
In particular,
\[
  \dim_\CC T_{(I_\lambda,I_{\lambda\setminus c})}S^{[n,n+1]}=2(n+1)=2n+2.
\]
\end{theorem}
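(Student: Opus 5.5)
The plan is to compute the compatibility kernel of \Cref{thm:nested-kernel} weight by weight, using that $\delta$ is $T$-equivariant at the fixed point $(I_\lambda,I_\mu)$, $\mu=\lambda\setminus c$. By Cheah's smoothness (\Cref{thm:cheah-smooth}) the kernel has dimension at least, in fact exactly, $2n+2$. It therefore suffices to exhibit $2n+2$ linearly independent homogeneous kernel elements whose weights are the $w_1(s),w_2(s)$. Independence is automatic when the weights are distinct. When a weight repeats, independence has to be checked by looking at the components of the kernel elements.

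The kernel elements will be built from the arrow basis $H^\lambda_s,V^\lambda_s$ of $\Hom_R(I_\lambda,R/I_\lambda)$ and from \Cref{lem:extension-across-corner}. Projecting the kernel to the first factor gives a map $K\to\Hom(I_\lambda,R/I_\lambda)$. Its image consists of those $\alpha$ for which $\rho_c\circ\alpha$ extends to $I_\mu$. Its kernel consists of pairs $(0,\beta)$ with $\beta|_{I_\lambda}=0$. Such a $\beta$ is determined by $u=\beta(m_c)$ subject to $xu=yu=0$, so $u\in\Soc_0(R/I_\mu)$.

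The proof then runs through three cases of boxes $s$.

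\textbf{Case 1.} Suppose $s$ is not left of $c$. Then I will check, using the syzygy criterion (\Cref{lem:monomial-syzygy-criterion}), that $H^\lambda_s$ composed with $\rho_c$ extends. The value $u$ is determined by translating the arrow to start at $m_c$, or $u=0$ if that lands outside $D(\mu)$. This gives a kernel element of weight $q^{a+1}t^{-\ell}$. The same argument applies to $V^\lambda_s$ when $s$ is not below $c$.

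\textbf{Case 2.} Suppose $s$ is left of $c$. I expect $H^\lambda_s$ to fail to extend: its critical relation sits at the end of row $j_0$, exactly where $c$ is removed. The required $u$ would be the head shifted back by one unit in $x$, and this creates an obstruction. I would instead produce a replacement kernel element of weight $q^{a(s)}t^{-\ell(s)}$, which is the shortened arrow. Concretely, $q^{a(s)}t^{-\ell(s)}$ should arise as $H^\mu_{s}$, since $a_\mu(s)=a_\lambda(s)-1$. Take $\beta=H^\mu_s$ and choose $\alpha$ as the compatible lift, possibly zero on $I_\lambda$ modulo corrections. Then verify $\pi\alpha=\beta\iota$. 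Boxes below $c$ are handled symmetrically with $V^\mu_s$.

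\textbf{Case 3.} The corner $c$ itself has $a=\ell=0$, and for it the formula gives the ordinary weights $q$ and $t$. The pairs $(0,\beta)$ with $u=m_c$-type socle elements of $R/I_\mu$ must be accounted for here. I plan to match these socle classes, which correspond to the corners of $\mu$ adjacent to $c$, against the shortened arrows in Case 2. This is consistent with the weight bookkeeping, and the total count is then $2|\lambda|=2n+2$.

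The main obstacle is Case 2: showing that the shortened arrow really lies in the kernel, and that the modified and unmodified families together are independent. The weight $q^{a}t^{-\ell}$ can coincide with another weight in the list. In that case I would compare the $\beta$-components, meaning the value at $m_c$ and the heads in $D(\mu)$, to separate the elements. Failing a clean direct argument, I would fall back on the upper bound. The total dimension is exactly $2n+2$ by smoothness. Each weight space of $K$ can be bounded above by a direct count of solutions to the syzygy equations of \Cref{lem:monomial-syzygy-criterion} in that degree, and the constructed elements then fill $K$.
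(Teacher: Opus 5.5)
Your framework is sound. Cheah's smoothness gives $\dim K=2n+2$ for $K=\ker\delta$, so exhibiting $2n+2$ independent weight vectors with the predicted weights would prove the theorem. Your identification of the pairs $(0,\beta)$ with $\Soc_0(R/I_\mu)$ is also correct. This route would be more self-contained than the paper's, which takes the basis statement from Cheah and Chaput--Evain, uses the corner-extension lemma only to explain the shortening, and explicitly defers repeated weights to the cited basis.

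The gap is that Cases 1 and 2 solve the incidence condition arrow by arrow, and this fails as soon as a weight repeats. Take $\lambda=(4,2)$, $c=(3,0)$, $\mu=(3,2)$, so $I_\lambda=(x^4,x^2y,y^2)$, $I_\mu=(x^3,x^2y,y^2)$ and $m_c=x^3$. The weight $q^2$ occurs twice in $\Hom(I_\lambda,R/I_\lambda)$: $H^\lambda_{(2,0)}$ sends $x^4\mapsto x^2$, and $H^\lambda_{(0,1)}$ sends $x^2y\mapsto y$, with the other generators going to $0$. The box $(0,1)$ is not left of $c$, yet $\rho_cH^\lambda_{(0,1)}$ does not extend. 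By homogeneity $u=\beta(x^3)=ex$, and the two extension equations read $ex^2=\rho_cH^\lambda_{(0,1)}(x^4)=0$ and $exy=\rho_cH^\lambda_{(0,1)}(x^3y)=xy$ in $R/I_\mu$, which is impossible. In fact the weight-$q^2$ part of $K$ is spanned by $(H^\lambda_{(2,0)}+H^\lambda_{(0,1)},\beta)$ with $\beta(x^3)=x$ and $\beta(x^2y)=y$. So Case 1 produces nothing at this weight, and the lower bound breaks.

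Case 2 fails the same way. Take $\lambda=(4,3)$, $c=(2,1)$, $s=(0,1)$, so $I_\lambda=(x^4,x^3y,y^2)$ and $\mu=(4,2)$. A weight-$q^2$ lift $\alpha$ of $H^\mu_{(0,1)}|_{I_\lambda}$ needs equal coefficients on $x^4\mapsto x^2$ and $x^3y\mapsto xy$, because $y\alpha(x^4)=x\alpha(x^3y)$ and $x^2y=m_c\neq 0$ in $R/I_\lambda$. But $\pi\alpha=H^\mu_{(0,1)}|_{I_\lambda}$ forces these coefficients to be $0$ and $1$. Only $H^\mu_{(0,1)}+H^\mu_{(2,0)}$ lifts.

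Your expectation that $H^\lambda_s$ fails to extend for $s$ left of $c$ is also wrong. In the first example $H^\lambda_{(1,0)}$ (sending $x^4\mapsto xy$) extends, paired with $H^\mu_{(0,0)}$, and it supplies the shortened weight $q^3t^{-1}$ of the box $(0,0)$.

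Case 3 is muddled as well. $\Soc_0(R/I_\mu)$ is spanned by all removable corners $(i',j')$ of $\mu$, not only those adjacent to $c$; also $m_c$ is $0$ in $R/I_\mu$. Each such corner gives a kernel vector of weight $q^{i_0-i'}t^{j_0-j'}$. This is the shortened weight of $(i',j_0)$ if $i'<i_0$, or of $(i_0,j')$ if $j'<j_0$. It has nothing to do with the weights $q,t$ of $c$ itself. For example, for $\lambda=(2,1)$, $c=(1,0)$, the only kernel vector of weight $qt^{-1}$ is $(0,\beta)$ with $\beta(x)=y$.

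So shortened weights arise partly from the socle and partly from pairs whose $\alpha$ is the ordinary arrow of another box. At repeated weights only specific linear combinations survive. The bookkeeping ``one kernel vector per box, built from that box's own arrow'' is therefore not available.

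What is missing is an actual computation of each weight space $K_w$. One way is via $0\to\Soc_0(R/I_\mu)\to K\to\{\alpha:\rho_c\alpha\text{ extends}\}\to 0$, together with a combinatorial description of the weight spaces of the three Hom spaces and of the rank of $\delta$ on each. Your fallback of bounding each $K_w$ above by a direct count and then invoking smoothness would suffice logically. However, you give no method for that count, and it is the entire content of the theorem. The paper obtains it by citing the tangent basis theorem.
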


\begin{proof}
The tangent-basis statement is the cited theorem for the smooth nested Hilbert scheme.  Cheah constructs explicit weight bases for the smooth nested schemes $Z_{n-1,n}(S)$ \cite[Section~2.6]{Cheah1998}, Chaput--Evain express the same tangent representation using staircases and cleft pairs \cite[Section~2.2]{ChaputEvain2015}, and Koncki--Zielenkiewicz record the resulting character as the row-and-column shortening rule for the added box \cite[Section~5.3]{KonckiZielenkiewicz2025}.  Translating their notation to our convention, the added box of the smaller diagram $\mu=\lambda\setminus c$ is the corner $c$ of $\lambda$, and our geometric torus action gives the displayed powers of $q$ and $t$.

We now explain how the same rule is read from the compatibility kernel of \Cref{thm:nested-kernel}.  Put $\mu=\lambda\setminus c$ and $m_c=x^{i_0}y^{j_0}$.  As ideals,
\[
  I_\mu=I_\lambda+(m_c),
\]
although the minimal monomial boundary changes locally: the new boundary generator $m_c$ is inserted and adjacent redundant boundary generators may disappear.  Let
\[
  \rho_c:R/I_\lambda\longrightarrow R/I_\mu
\]
be the quotient map.  By \Cref{lem:extension-across-corner}, a deformation $\alpha:I_\lambda\to R/I_\lambda$ can be paired with a deformation $\beta:I_\mu\to R/I_\mu$ precisely when the projected map $\rho_c\alpha$ extends across the new generator $m_c$.  This extension is controlled by the two equations
\begin{equation}
\label{eq:corner-extension-equations}
  xu=\rho_c\alpha(xm_c),
  \qquad
  yu=\rho_c\alpha(ym_c),
\end{equation}
where $u=\beta(m_c)$.  These are exactly the two local adjacent syzygy conditions created by inserting $m_c$ into the boundary.

We spell out this local shift in terms of sources and targets of the ordinary arrows.  For $s=(i,j)\in D(\lambda)$, the horizontal arrow $H_s^\lambda$ has source
\[
  x^{i+a(s)+1}y^j
\]
on the right boundary of the row through $s$, and target monomial
\[
  x^iy^{j+\ell(s)}.
\]
Thus its weight is $q^{a(s)+1}t^{-\ell(s)}$.  Similarly, the vertical arrow $V_s^\lambda$ has source
\[
  x^iy^{j+\ell(s)+1}
\]
and target monomial
\[
  x^{i+a(s)}y^j,
\]
so its weight is $q^{-a(s)}t^{\ell(s)+1}$.

Now suppose that $s=(i,j_0)$ lies to the left of the distinguished corner $c=(i_0,j_0)$.  Then $a(s)=i_0-i$, and the source of the ordinary horizontal arrow is
\[
  x^{i+a(s)+1}y^{j_0}
  =
  x^{i_0+1}y^{j_0}
  =
  xm_c.
\]
After passing from $I_\lambda$ to $I_\mu=I_\lambda+(m_c)$, the new generator $m_c$ is inserted one step before $xm_c$ on this part of the boundary.  The corresponding homogeneous nested arrow is therefore read with source $m_c$ and with the same target monomial
\[
  x^iy^{j_0+\ell(s)}.
\]
The values on the rest of the boundary are then propagated by the same adjacent syzygies as in the ordinary arrow construction.  More precisely, by \Cref{lem:monomial-syzygy-criterion} the homomorphism $\beta$ is determined by its values on the minimal generators of $I_\mu$ subject to the adjacent lcm-syzygies along the boundary chain. Away from the inserted generator $m_c$ this chain and its syzygies coincide with those of $I_\lambda$, so the values of $\beta$ on the remaining boundary generators are forced from $\beta(m_c)$ by exactly the relations that propagate the ordinary arrow $H_s^\lambda$, and the only relation that changes is the local pair at $m_c$ recorded by \eqref{eq:corner-extension-equations}.  Indeed, if $\beta(m_c)=x^iy^{j_0+\ell(s)}$ in $R/I_\mu$, then the first equation in \eqref{eq:corner-extension-equations} says that the value at the old boundary monomial $xm_c$ is obtained by multiplying this value by $x$.  The second equation imposes the adjacent compatibility in the other direction. For this horizontal arrow the left-hand side is zero because $y\cdot x^iy^{j_0+\ell(s)}$ lies outside the diagram, and the corresponding value of $\rho_c\alpha(ym_c)$ is zero as well.  Thus the weight of the modified horizontal arrow is
\[
  \frac{x^{i_0}y^{j_0}}{x^iy^{j_0+\ell(s)}}
  =
  q^{i_0-i}t^{-\ell(s)}
  =
  q^{a(s)}t^{-\ell(s)}.
\]
This is the ordinary horizontal weight divided by $q$.

The corner itself is a small exception to the phrase ``left of the corner''.  For $s=c$, the ordinary horizontal arrow has source $xm_c$ and target $m_c$.  Since $\rho_c(m_c)=0$ in $R/I_\mu$, this arrow is compatible with $\beta(m_c)=0$ and its weight remains $q$.  For all other horizontal arrows not lying to the left of $c$, the local relation controlling the arrow is not shifted by the insertion of $m_c$, so the horizontal weight remains
\[
  q^{a(s)+1}t^{-\ell(s)}.
\]

The vertical arrows are identical with $x$ and $y$ interchanged.  If $s=(i_0,j)$ lies below $c$, then $\ell(s)=j_0-j$, and the source of the ordinary vertical arrow is
\[
  x^{i_0}y^{j+\ell(s)+1}
  =
  x^{i_0}y^{j_0+1}
  =
  ym_c.
\]
The insertion of $m_c$ moves this vertical boundary relation one step earlier.  The modified vertical arrow has source $m_c$ and target monomial
\[
  x^{i_0+a(s)}y^j,
\]
with the remaining boundary values again propagated by the adjacent syzygies.  The second equation in \eqref{eq:corner-extension-equations} records the compatibility with the old value at $ym_c$.  Its weight is therefore
\[
  \frac{x^{i_0}y^{j_0}}{x^{i_0+a(s)}y^j}
  =
  q^{-a(s)}t^{j_0-j}
  =
  q^{-a(s)}t^{\ell(s)}.
\]
This is the ordinary vertical weight divided by $t$.

For $s=c$, the ordinary vertical arrow has source $ym_c$ and target $m_c$, which is killed by $\rho_c$, hence its weight remains $t$.  For all other vertical arrows not lying below $c$, the local relation is unchanged and the weight remains
\[
  q^{-a(s)}t^{\ell(s)+1}.
\]
When a weight occurs with multiplicity, this arrow description is understood
with respect to the standard arrow basis fixed in the cited tangent basis
construction \cite{Cheah1998}. The character formula itself is independent of this choice of basis. Thus the compatibility kernel explains exactly the row-and-column modifications stated in the theorem.  The tangent basis construction proves that the corresponding modified arrows form a basis of the tangent space. Equivalently, there are two weights for each of the $|\lambda|=n+1$ boxes.  The dimension also agrees with Cheah's smoothness theorem as in \Cref{thm:cheah-smooth}.  This gives the character formula in the convention in our paper.
\end{proof}

\begin{remark}
The basis construction used at the start of the proof is the standard tangent basis theorem of Cheah, Chaput--Evain, and Koncki--Zielenkiewicz.  The role of the compatibility kernel argument is to place their row-and-column formula in the deformation theory framework of this paper: the only new local condition imposed by the incidence relation is the extension across the corner generator $m_c$, which is encoded by \eqref{eq:corner-extension-equations}.
\end{remark}

For instance, let $\lambda=(4,2,1)$ and let the distinguished corner be $c=(1,1)$, shown in gray below.  The box $s=(1,0)$ lies below $c$.  The ordinary vertical arrow attached to $s$ must be shortened by one step in the nested tangent space:
\[
\begin{array}{ccc}
\begin{tikzpicture}[scale=0.62,baseline={(0,-0.2)}]
  \draw[gray!55] (0,0) grid (4,4);
  \fill[gray!35] (1,1) rectangle (2,2);
  \foreach \x/\y in {0/0,1/0,2/0,3/0,0/1,1/1,0/2}{\draw[thick] (\x,\y) rectangle ++(1,1);}
  \fill[black] (1.5,0.5) circle (0.08);
  \draw[->,>=latex,thick] (1.5,2.5) -- (3.5,0.5);
\end{tikzpicture}
&\quad\longmapsto\quad&
\begin{tikzpicture}[scale=0.62,baseline={(0,-0.2)}]
  \draw[gray!55] (0,0) grid (4,4);
  \fill[gray!35] (1,1) rectangle (2,2);
  \foreach \x/\y in {0/0,1/0,2/0,3/0,0/1,1/1,0/2}{\draw[thick] (\x,\y) rectangle ++(1,1);}
  \fill[black] (1.5,0.5) circle (0.08);
  \draw[->,>=latex,thick] (1.5,1.5) -- (3.5,0.5);
\end{tikzpicture}
\end{array}
\]
The gray box represents the corner added to $\mu$ (equivalently, removed from $\lambda$), and the black dot marks the box $s$.  The right-hand diagram expresses the same compatibility condition as the kernel formula: the ordinary arrow uses the old boundary relation through $ym_c$, while the shortened arrow uses the new generator $m_c$.

\subsection{Comparison with the ordinary Hilbert scheme}

The nested weight formula can be summarized as follows.  Begin with the character of $T_{I_\lambda}S^{[n+1]}$ from the arm-leg formula of \Cref{thm:hilb-arm-leg} (with $n$ replaced by $n+1$):
\[
  \sum_{s\in D(\lambda)}
  \left(
    q^{a(s)+1}t^{-\ell(s)}+q^{-a(s)}t^{\ell(s)+1}
  \right).
\]
Then:
\begin{itemize}
  \item for boxes left of the corner $c$, replace $q^{a(s)+1}t^{-\ell(s)}$ by $q^{a(s)}t^{-\ell(s)}$.
  \item for boxes below the corner $c$, replace $q^{-a(s)}t^{\ell(s)+1}$ by $q^{-a(s)}t^{\ell(s)}$.
  \item leave all other weights unchanged.
\end{itemize}
Thus the nested tangent space has the same number of weights as the tangent space to $S^{[n+1]}$, but some of the weights are shifted by one unit in the distinguished row or column.

This is consistent with the global geometry: both $S^{[n+1]}$ and $S^{[n,n+1]}$ have dimension $2n+2$, but the nested scheme remembers extra incidence data, so the local torus representation is not generally the same.

\section{Examples and computational verification}
\label{chap:examples-computation}

This section works out several fixed-point tangent characters and explains the finite Macaulay2 \cite{Macaulay2} verification included with the paper source.  We check the shortening rule that the kernel formula of \Cref{thm:nested-kernel} produces the same weights as \Cref{thm:nested-weights}.

\subsection{The partition \texorpdfstring{$(2,1)$}{(2,1)}}

Let
\[
  \lambda=(2,1),
  \qquad
  D(\lambda)=\{(0,0),(1,0),(0,1)\}.
\]
The removable corners are
\[
  c_1=(0,1),
  \qquad
  c_2=(1,0).
\]
The ordinary Hilbert-scheme weights at $I_\lambda\in S^{[3]}$ are
\[
  q^2t^{-1},\ q^{-1}t^2,
  \ q,\ t,
  \ q,\ t.
\]

\subsubsection*{Removing the upper corner}

Take $c_1=(0,1)$.  The box below $c_1$ is $(0,0)$, and there are no boxes left of $c_1$.  Hence only the second weight attached to $(0,0)$ changes:
\[
  q^{-1}t^2\rightsquigarrow q^{-1}t.
\]
Thus
\[
  \operatorname{ch}_T T_{(I_{(2,1)},I_{(2)})}S^{[2,3]}
  =
  q^2t^{-1}+q^{-1}t+2q+2t.
\]

\subsubsection*{Removing the right corner}

Take $c_2=(1,0)$.  The box left of $c_2$ is $(0,0)$, and there are no boxes below $c_2$.  Hence only the first weight attached to $(0,0)$ changes:
\[
  q^2t^{-1}\rightsquigarrow qt^{-1}.
\]
Thus
\[
  \operatorname{ch}_T T_{(I_{(2,1)},I_{(1,1)})}S^{[2,3]}
  =
  qt^{-1}+q^{-1}t^2+2q+2t.
\]
In both cases the dimension is six, as expected for $S^{[2,3]}$.

\subsection{A less symmetric example}
\label{sec:less-symmetric-example}

Let
\[
  \lambda=(3,2,1),
  \qquad
  c=(1,1).
\]
Then $|\lambda|=6$, so the corresponding fixed point lies in $S^{[5,6]}$.  The corner $c$ is the middle removable corner:
\[
  \drawyoungmarked{0/0,1/0,2/0,0/1,1/1,0/2}{1}{1}.
\]
The boxes left of $c$ and below $c$ are respectively
\[
  (0,1),
  \qquad
  (1,0).
\]
The ordinary weights at $I_\lambda\in S^{[6]}$ are listed in the middle columns of the table, and the nested weights after shortening are listed on the right.
\[
\begin{array}{c|c|c|c|c}
\toprule
\text{box }s & a(s) & \ell(s) & \text{ordinary weights} & \text{nested weights} \\
\midrule
(0,0) & 2 & 2 & q^3t^{-2},\ q^{-2}t^3 & q^3t^{-2},\ q^{-2}t^3 \\
(1,0) & 1 & 1 & q^2t^{-1},\ q^{-1}t^2 & q^2t^{-1},\ q^{-1}t \\
(2,0) & 0 & 0 & q,\ t & q,\ t \\
(0,1) & 1 & 1 & q^2t^{-1},\ q^{-1}t^2 & qt^{-1},\ q^{-1}t^2 \\
(1,1) & 0 & 0 & q,\ t & q,\ t \\
(0,2) & 0 & 0 & q,\ t & q,\ t \\
\bottomrule
\end{array}
\]
The character is therefore
\[
\begin{aligned}
\operatorname{ch}_T T_{(I_\lambda,I_{\lambda\setminus c})}S^{[5,6]}
={}&q^3t^{-2}+q^{-2}t^3+q^2t^{-1}+q^{-1}t  \\
&+qt^{-1}+q^{-1}t^2+3q+3t.
\end{aligned}
\]
There are twelve weights, equal to $2|\lambda|=12$.

\subsection{Computational verification}
\label{sec:computational-verification}

We also include a Macaulay2 code which checks the nested weight formula by computing the kernel in \Cref{thm:nested-kernel} directly from monomial generators and syzygies. We summarize the mathematical design here. The complete source is available at \url{https://github.com/chenyangzhaoicl/master_thesis_nested_hilbert_scheme_deformation}.

The input of the computation is a pair $(\lambda,c)$, where $\lambda$ is a partition and $c\in C(\lambda)$ is a removable corner. The script forms
\[
  \mu=\lambda\setminus c
\]
and the two monomial ideals
\[
  I_\lambda\subset I_\mu\subset R.
\]
Monomials are represented by exponent pairs $(i,j)$, so the monomial $x^iy^j$ is stored as the pair $(i,j)$. This makes divisibility, least common multiples, quotient bases, and torus weights completely explicit.

For a monomial ideal $I=(g_1,\ldots,g_r)$ and another monomial ideal $K$, the code constructs the vector space $\Hom_R(I,R/K)$ by assigning unknown coefficients to the possible images of the minimal generators $g_i$ in the monomial basis of $R/K$. It then imposes the lcm-syzygy equations
\[
  \frac{\operatorname{lcm}(g_i,g_j)}{g_i}\varphi(g_i)
  -
  \frac{\operatorname{lcm}(g_i,g_j)}{g_j}\varphi(g_j)
  =0
  \qquad\text{in }R/K.
\]
This is the computational version of \Cref{lem:monomial-syzygy-criterion}. The implementation imposes the pairwise lcm-syzygies, so in particular it includes the adjacent boundary syzygies used in the Young diagram arrow description.

The nested tangent space is then obtained by constructing the two ordinary Hom spaces
\[
  \Hom_R(I_\lambda,R/I_\lambda),
  \qquad
  \Hom_R(I_\mu,R/I_\mu),
\]
and imposing the compatibility equations
\[
  \pi\circ\alpha=\beta\circ\iota
\]
from \Cref{thm:nested-kernel}. Thus the linear system computed by the code is exactly the system defining
\[
  \ker\left(
  \Hom_R(I_\lambda,R/I_\lambda)
  \oplus
  \Hom_R(I_\mu,R/I_\mu)
  \longrightarrow
  \Hom_R(I_\lambda,R/I_\mu)
  \right).
\]

The torus weights are computed from the same convention used in the paper. If a variable sends a source monomial $x^iy^j$ to a target monomial $x^ry^s$, then its additive weight is
\[
  (i-r,j-s),
\]
corresponding to the character $q^{i-r}t^{j-s}$. The syzygy and compatibility equations are homogeneous for this grading. Therefore the full linear system splits into independent blocks indexed by torus weights. For each weight, the multiplicity is computed as
\[
  \#\{\text{variables of that weight}\}
  -
  \operatorname{rank}\{\text{equations of that weight}\},
\]
using exact rational linear algebra.

The script compares this computed multiset of weights with a second multiset obtained independently from the shortening formula in \Cref{thm:nested-weights}. The comparison therefore checks two different descriptions of the same tangent representation: the compatibility kernel that comes from deformation theory and the Young diagram shortening rule.

\subsubsection*{Worked trace for the less symmetric example}

For the example
\[
  \lambda=(3,2,1),
  \qquad
  c=(1,1),
  \qquad
  \mu=(3,1,1),
\]
the code prints the quotient bases and minimal generators used to build the linear system, and then prints two multisets of additive torus weights. The relevant output is
\[
\begin{array}{l}
\texttt{kernel side      = \{-1,1 => 1, -1,2 => 1, -2,3 => 1, 0,1 => 3,}\\
\texttt{                    1,-1 => 1, 1,0 => 3, 2,-1 => 1, 3,-2 => 1\}}\\
\texttt{modification side = \{-1,1 => 1, -1,2 => 1, -2,3 => 1, 0,1 => 3,}\\
\texttt{                    1,-1 => 1, 1,0 => 3, 2,-1 => 1, 3,-2 => 1\}}\\
\texttt{agree = true.}
\end{array}
\]
The printed notation means
\[
  \texttt{a,b => m}
  \qquad\Longleftrightarrow\qquad
  \text{$m$ copies of the character }q^at^b.
\]
Thus the displayed multiset gives
\[
  q^3t^{-2}+q^{-2}t^3+q^2t^{-1}+q^{-1}t
  +qt^{-1}+q^{-1}t^2+3q+3t,
\]
which agrees with the character computed by hand in \Cref{sec:less-symmetric-example}.

\begin{example}[Finite Macaulay2 check]
The Macaulay2 computation verifies the equality between the compatibility kernel weights and the shortening weights for every partition $\lambda$ with $|\lambda|\leq 16$ and every removable corner $c\in C(\lambda)$.

For the bound $|\lambda|\leq 16$, running
\[
  \texttt{M2 -{}-script code/verify\_weights.m2}
\]
generates the output
\[
\begin{array}{l}
\texttt{All tests passed for |lambda| <= 16.}\\
\texttt{Checked 2455 nested fixed points.}
\end{array}
\]
The first line says that the computed weight multiset agrees with the shortening rule multiset for every tested pair $(\lambda,c)$ with $|\lambda|\leq 16$. The second line records the size of the finite test: there are $2455$ pairs consisting of a partition of size at most $16$ and a removable corner. This automated check includes all partitions with a maximal size including those where exchanging the two coordinates or shortening the wrong arrow would change the answer. For the convenience of the execution time we put an adjustable maximal value for $|\lambda|$.
\end{example}

\bibliographystyle{alpha}
\bibliography{references}

\end{document}